\newcounter{main}
\newtheorem{theorem}{Theorem}[section]
\newtheorem{proposition}[theorem]{Proposition}
\newtheorem{lemma}[theorem]{Lemma}
\newtheorem{remark}{Remark}[section]
\newtheorem{maintheorem}{Theorem}
\newtheorem{maincorollary}{Corollary}
\theoremstyle{remark}
\def\NN{\mathbb{N}}
\def\RR{\mathbb{R}}
\def\UU{\mathcal{U}}
\def\plim{{p}\hbox{-}\lim}
\def\splim{{S(p)}\hbox{-}\lim}
\def\ulim{{\mathcal{U}}\hbox{-}\lim}
\numberwithin{equation}{section}
\DeclareMathOperator{\sh}{sh}
\title[Ultralimits of Birkhoff averages]{Ultralimits of Birkhoff averages}
\author[M. Carvalho]{Maria Carvalho}
\address{Maria Carvalho\\ Centro de Matem\'{a}tica da Universidade do Porto\\ Rua do
Campo Alegre 687\\ 4169-007 Porto\\ Portugal}
\email{mpcarval@fc.up.pt}
\author[F. J. Moreira]{Fernando Jorge Moreira}
\address{Fernando Jorge Moreira\\ Centro de Matem\'{a}tica da Universidade do Porto\\ Rua do Campo Alegre 687\\ 4169-007 Porto\\ Portugal}
\email{fsmoreir@fc.up.pt}
\date{\today}
\thanks{MC and FM were partially supported by CMUP (UID/MAT/00144/2013), which is funded by FCT (Portugal) with national (MEC) and European structural funds through the programs FEDER, under the partnership agreement PT2020.}
\keywords{Ultrafilters; uniform integrability; non-standard analysis, ergodic theorems.}
\subjclass[2010]{28A25, 03H05, 37A30}
\begin{document}

\maketitle

\begin{abstract}
Given a compact metric space $X$ and a probability measure in the $\sigma-$algebra of Borel subsets of $X$, we will establish a dominated convergence theorem for ultralimits of sequences of integrable maps and apply it to deduce a non-standard ergodic-like theorem for any probability measure. %and dynamical systems acting in $X$.
\end{abstract}

\section{Introduction}

Let $(X,\mathcal{B}, \mu)$ be a measure space, where $\mathcal{B}$ is a $\sigma-$algebra of subsets of $X$ and $\mu$ is a $\sigma-$finite measure on $\mathcal{B}$, and consider a measurable map $T:X \to X$. One says that $T:X\to X$ preserves $\mu$ (or that $\mu$ is $T-$invariant) if $\mu(T^{-1}(A)) = \mu(A)$ for any $A \in \mathcal{B}$. The measure $\mu$ is said to be ergodic with respect to $T$ if, given $A\in \mathcal{B}$ with $T^{-1}(A)=A$, we have $\mu(A)\times \mu(X\setminus A)=0$. For each $x \in X$, its orbit by $T$ is defined by the sequence of iterates $\left(T^n(x)\right)_{n \,\in \, \mathbb{N}}$, where $T^0 = Id$. If $T$ preserves $\mu$ and $\mu$ is finite, the Recurrence Theorem of Poincar\'e asserts that, for every $A \in \mathcal{B}$ with $\mu(A) > 0$, the orbit of almost every point in $A$ returns to $A$ infinitely many times. If, additionally, $\mu$ is ergodic with respect to $T$, then the expected time for the first return, as estimated by Kac (cf. \cite{Petersen}), is of the order of $\frac{1}{\mu(A)}$. Besides, by the Ergodic Theorem of Birkhoff \cite{Walters, Katznelson-Weiss} we may also evaluate the mean sojourn of almost every orbit in $A$, and it is asymptotically close to $\mu(A)$. The statement of the Ergodic Theorem is in fact more general, asserting that, if $T$ preserves $\mu$ and $\varphi:X \to \mathbb{R}$ belongs to $L^1(X)$ (as happens with the characteristic map $\chi_A$ for every set $A \in \mathcal{B}$ whenever $\mu$ is a finite measure), then there exists $\widetilde{\varphi}\in L^1(X)$ such that, at $\mu-\text{a.e.} \,x \in X$, we have $\lim_{n \to +\infty}\, \frac 1 n \,\sum_{k=1}^n\,\varphi \circ T^k(x) = \widetilde{\varphi}(x)$ and $\widetilde{\varphi}\circ T(x) = \widetilde{\varphi}(x)$, and $\int_X \, \widetilde{\varphi}\,d\mu = \int_X \,\varphi d\mu.$

Several generalizations of this theorem are known, either demanding less from the obser\-vable $\varphi$ or from the probability measure $\mu$ (cf. \cite{Krengel, CM2014, CV-M-Marinacci}). The aim of this work is to settle an abstract frame for these generalizations through a non-standard dominated convergence theorem whenever $X$ is a compact metric space and $\mu$ is a Borel probability measure. Its application to Birkhoff averages of measurable bounded potentials, with res\-pect to either dynamical systems without invariant measures (such as $T:[0,1] \to [0,1]$ given by $T(x)=\frac{x}{2}$ if $x \neq 0$, $T(0)=1$) or to those whose invariant measures have relevant sets of points with historical behavior (as the ones described in \cite{Takens2, Kiriki-Soma}), conveys more information on the accumulation points of such averages. The main tools in this non-standard approach are the notions of ultrafilter and ultralimit, besides the ultrapower construction in order to produce extensions of relevant structures and transformations to the non-standard realm. (Concerning non-standard analysis, we refer the reader to \cite{Cutland, Goldblatt}.)

The paper is organized as follows. After recalling a few basic properties of ultralimits, ultraproducts, the shadow map and integrability, we will prove a Dominated Convergence Theorem for ultralimits, from which we will deduce an ergodic-like theorem for a Borel probability measure in a compact Hausdorff space, where a dynamical system $T$ is acting, and a measurable bounded function. Using the shift map in the space of ultrafilters, we will also show the existence of a space mean of the Birkhoff limits when we take into account all the possible choices of the ultrafilter.

\section{Basic definitions}

In this section we will give a brief though comprehensive list of the non-standard concepts and results we will use in the sequel. More information may be found in \cite{Goldblatt}.

\subsection{Filters}

A filter on a set $X$ is a non-empty family $\mathcal{F}$ of subsets of $X$ such that:
\begin{itemize}
\item[(i)] $A, B \in \mathcal{F} \quad \Rightarrow \quad  A \cap B \in \mathcal{F}$.
\item[(ii)] $A \in \mathcal{F} \text{ and } A \subseteq B \subseteq X \quad \Rightarrow \quad B \in \mathcal{F}$.
\item[(iii)] $\emptyset \notin \mathcal{F}$.
\end{itemize}

\medskip

A filter $\mathcal{U}$ on $X$ is said to be an \emph{ultrafilter} if for every $A \subseteq X$ either $A \in \mathcal{U}$ or $X\setminus A \in \mathcal{U}$ (but not both due to conditions (i) and (iii)). Ultrafilters are maximal filters with respect to the inclusion, and provide a useful criterion to establish which sets are considered large. Given $m \in \mathbb{N}$, the family $\mathcal{U}_{m}=\{A \subseteq\mathbb{N} \colon m \in A\}$ is an ultrafilter in  $\mathbb{N}$, called \emph{principal}. We are interested in non-principal ultrafilters as a measure of largeness of sets. For instance, take the Fr\'echet filter $\mathcal{F}_{\text{cf}} = \{A \subseteq \mathbb{N}\colon \mathbb{N}\setminus A \text{ is finite}\}$, that is, the collection of subsets of $\mathbb{N}$ whose complement is finite. There exists an ultrafilter $\mathcal{U}_{\text{cf}}$ containing $\mathcal{F}_{\text{cf}}$: since the union of any chain of proper filters is again a proper filter, by Zorn's Lemma the filter $\mathcal{F}_{\text{cf}}$ is contained in a maximal proper filter $\mathcal{U}_{\text{cf}}$. It is not hard to see that $\mathcal{U}_{\text{cf}}$ is a non-principal ultrafilter. One advantage of using this kind of ultrafilters is the fact that an ultrafilter is non-principal if and only if it contains the Fr\'echet filter of co-finite subsets.

Assuming that $\mathbb{N}$ has the discrete topology, we will denote by $\beta \mathbb{N}$ the Stone-$\check{C}$ech compactification of $\mathbb{N}$, which is a non-metrizable Hausdorff compact space (cf. \cite{Willard}). The space $\beta \mathbb{N}$ is homeomorphic to the collection $\mathbb{U}_\mathbb{N}$ of all the ultrafilters of subsets of $\mathbb{N}$ endowed with the topology generated by the (open and closed) sets $\left\{\mathbb{O}_F\right\}_{F \subseteq \mathbb{N}}$ where
$$\mathbb{O}_F=\{\mathcal{U} \in \mathbb{U}_\mathbb{N} \colon F \in \mathcal{U}\}.$$

\subsection{Ultralimits}\label{sse:ultralimits}

Let $X$ be a compact Hausdorff space and $\mathcal{U}$ be a non-principal ultrafilter on $\NN$. We say that a sequence $(x_n)_{n \in \mathbb{N}}$ in $X$ is $\mathcal{U}-$convergent in $X$ to $\ell$, and denote its limit by $\mathcal{U} \text{-} \lim_n \,x_n$, if, for any neighborhood $V_\ell$ of $\ell$, we have $\{n\in \NN  \colon x_n \in V_\ell\} \in \mathcal{U}$.  %For instance, given $n_0 \in \mathbb{N}$, we have $\mathcal{U}_{n_0} \text{-} \lim_n \, x_n = x_{n_0}$. On the other hand, the $\mathcal{F}_{\text{cf}} \text{ -} \lim_n$ coincides with the usual notion of convergence of sequences.
Observe that the $\mathcal{U} \text{-} \lim_n \,x_n$ always exists since $X$ is compact. Otherwise, for every $\ell \in X$ we may find an open neighborhood $V_\ell$ such that the set $\mathcal{C}_\ell = \{n \in \mathbb{N} \colon x_n \in V_\ell\}$ does not belong to $\mathcal{U}$. As $\mathcal{U}$ is an ultrafilter, the complement $\mathbb{N} \setminus \mathcal{C}_{\ell}$ must be in $\mathcal{U}$. Besides, as $X$ is compact, we may take a finite subcover $\left\{V_{\ell_1}, \ldots, V_{\ell_k}\right\}$ of the cover $\left(V_\ell\right)_{\ell \in X}$. The finite intersection $\bigcap_{j=1}^k\,\Big(\mathbb{N} \setminus \mathcal{C}_{\ell_j}\Big)$ is in $\mathcal{U}$ as well. Hence, as the empty set is not in $\mathcal{U}$, we conclude that $\bigcap_{j=1}^k\,\Big(\mathbb{N} \setminus \mathcal{C}_{\ell_j}\Big) \neq \emptyset$. However, if $m \in \bigcap_{j=1}^k\,\Big(\mathbb{N} \setminus \mathcal{C}_{\ell_j}\Big)$, then $x_m$ does not belong to $V_\ell$ for every $1 \leq \ell \leq k$. This contradicts the fact that $\left\{V_{\ell_1}, \ldots, V_{\ell_k}\right\}$ is a cover of $X$. Additionally, as $X$ is Hausdorff, the ultralimit is unique.

Given a sequence $(x_n)_{n \in \mathbb{N}}$, the ultralimits for all the possible choices of non-principal ultrafilters in $\mathbb{N}$ are precisely the cluster points of this sequence. In particular, if the sequence is convergent in $X$ to a limit $\ell$, then $\mathcal{U} \text{-} \lim_n \, x_n = \ell$ for every non-principal ultrafilter $\mathcal{U}$ in $\mathbb{N}$. If a real-valued sequence is not bounded, its $\UU-$limit always exists, though it may be either $+\infty$ or $-\infty$.

\subsection{Ultrapower construction}

Let $X$ be a compact metric space, $\mu$ be a probability measure defined on the $\sigma-$algebra $\mathcal{B}$ of the Borel subsets $X$ and $\mathcal{U}$ be an ultrafilter in $\mathbb{N}$. Given two sequences $\left(a_n\right)_{n \,\in\, \mathbb{N}}$ and $\left(b_n\right)_{n \,\in\, \mathbb{N}}$ of elements of $X$, define the equivalence relation
$$\left(a_n\right)_{n \,\in\, \mathbb{N}} \quad \backsim \quad \left(b_n\right)_{n \,\in\, \mathbb{N}} \quad \quad \Leftrightarrow \quad \quad \Big\{n \in \NN \colon \,a_n \,= \, b_n\Big\} \,\, \in \mathcal{U}.$$
Denote by $\widehat X$ the \emph{ultrapower} of $X$ made by the equivalence classes of sequences of elements of $X$, that is,
$$\widehat X \,\,:= \,\,X^\NN_{\diagup \backsim} \,\,= \,\,\Big\{[\left(x_n\right)_{n \,\in \,\NN}] \colon x_n \in X \,\,\, \forall n \in \NN\Big\}.$$
In what follows we will denote by $[x_n]$ the equivalence class of the sequence $\left(x_n\right)_{n \, \in \, \NN}$ in $\widehat X$. In a natural way, $X$ is embedded in $\widehat X$ by the inclusion map $\iota: \,X \hookrightarrow \widehat X$ given by $\iota(x)=[x]$, where $[x]$ stands for the equivalence class of the constant sequence whose term is constant and equal to $x$.

\subsection{The shadow map}

Part of the usefulness of the ultrapower structures relies on the possibility to transfer information from its universe to the standard realm. To do it, one often uses the \emph{shadow map} $\sh_{_{\mathcal{U},\,X}}\colon \widehat X \to X$ defined by
$$\sh_{_{\mathcal{U},\,X}}([\left(a_n\right)_{n \, \in \, \NN}])\,\, := \,\,\mathcal{U} \text{-} \lim_n \, a_n.$$

\begin{lemma}\label{le:openclosedsets} $\,$
\begin{enumerate}
\item $C\subset X$ is closed if and only if $ \widehat C \subset \sh_{_{\mathcal{U},\,X}}^{-1}(C)$.
\item $O\subset X$ is open if and only if  $\sh_{_{\mathcal{U},\,X}}^{-1}(O)\subset \widehat O$.
\end{enumerate}
\end{lemma}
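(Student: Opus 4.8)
The plan is to prove both equivalences by unwinding the definitions of the shadow map and of $\mathcal{U}$-convergence, exploiting the fact that $X$ is a compact Hausdorff space so that neighborhoods separate points from closed sets.

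For part (1), I would first establish the forward implication: assume $C$ is closed and let $[(a_n)_{n}]\in\widehat C$, meaning $\{n\colon a_n\in C\}\in\mathcal{U}$. Write $\ell=\sh_{_{\mathcal{U},X}}([(a_n)_n])=\mathcal{U}\text{-}\lim_n a_n$; I must show $\ell\in C$. If $\ell\notin C$, then since $C$ is closed and $X$ is (compact) Hausdorff — hence regular — there is an open neighborhood $V_\ell$ of $\ell$ with $V_\ell\cap C=\emptyset$. By definition of the ultralimit, $\{n\colon a_n\in V_\ell\}\in\mathcal{U}$; intersecting with $\{n\colon a_n\in C\}\in\mathcal{U}$ yields a set in $\mathcal{U}$ of indices $n$ with $a_n\in V_\ell\cap C=\emptyset$, so $\emptyset\in\mathcal{U}$, a contradiction. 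Hence $\ell\in C$, i.e. $[(a_n)_n]\in\sh_{_{\mathcal{U},X}}^{-1}(C)$. For the converse, suppose $C$ is not closed and pick $x\in\overline{C}\setminus C$. Then every neighborhood of $x$ meets $C$; since $X$ is metric I can choose $a_n\in C\cap B(x,1/n)$, so $a_n\to x$ and therefore $\mathcal{U}\text{-}\lim_n a_n=x$ for every non-principal $\mathcal{U}$ (using the remark in \S\ref{sse:ultralimits} that convergent sequences have ultralimit equal to the limit). Then $[(a_n)_n]\in\widehat C$ but $\sh_{_{\mathcal{U},X}}([(a_n)_n])=x\notin C$, so $\widehat C\not\subset\sh_{_{\mathcal{U},X}}^{-1}(C)$.

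For part (2), I would deduce it from part (1) by complementation, which is clean because $\mathcal{U}$ is an ultrafilter. Note first that for any $S\subseteq X$ one has $\widehat{X\setminus S}=\widehat X\setminus\widehat S$: the sequence condition $\{n\colon a_n\in X\setminus S\}\in\mathcal{U}$ is, by the ultrafilter property, equivalent to $\{n\colon a_n\in S\}\notin\mathcal{U}$, i.e. to $[(a_n)_n]\notin\widehat S$ (one should check this is well-defined on equivalence classes, which is routine). Also $\sh_{_{\mathcal{U},X}}^{-1}(X\setminus S)=\widehat X\setminus\sh_{_{\mathcal{U},X}}^{-1}(S)$ since $\sh_{_{\mathcal{U},X}}$ is a genuine map. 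Now $O$ is open iff $C:=X\setminus O$ is closed iff (by part (1)) $\widehat C\subset\sh_{_{\mathcal{U},X}}^{-1}(C)$; taking complements in $\widehat X$ this reads $\sh_{_{\mathcal{U},X}}^{-1}(O)=\widehat X\setminus\sh_{_{\mathcal{U},X}}^{-1}(C)\subset\widehat X\setminus\widehat C=\widehat O$, which is the desired condition.

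The only genuinely delicate point is the use of regularity (normality) of $X$ in the forward direction of part (1) — separating the point $\ell$ from the closed set $C$ by an open neighborhood disjoint from $C$ — but this is automatic since a compact Hausdorff space is normal, and in the paper's setting $X$ is even metric so $V_\ell$ can be taken to be a small metric ball. The reverse directions rely on the metric structure to build an explicit sequence in $C$ converging to a boundary point, and on the already-recorded fact that ordinary convergence forces the value of every ultralimit. Everything else is bookkeeping with the ultrafilter axioms.
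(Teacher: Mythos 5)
Your proposal is correct and follows essentially the same route as the paper: the forward direction of (1) separates the shadow point from the closed set $C$ by an open ball (the paper) or a neighborhood from regularity (your version, which reduces to a ball in the metric setting), the reverse direction builds a sequence in $C$ converging to a point of $\overline{C}\setminus C$, and part (2) is obtained from part (1) by complementation using $\widehat{X\setminus S}=\widehat X\setminus\widehat S$ and $\sh_{_{\mathcal{U},\,X}}^{-1}(X\setminus S)=\widehat X\setminus\sh_{_{\mathcal{U},\,X}}^{-1}(S)$. The only cosmetic difference is that in the forward step you derive $\emptyset\in\mathcal{U}$ from intersecting two large sets, while the paper phrases the same contradiction via $\widehat{x}\in\widehat{X\setminus C}=\widehat X\setminus\widehat C$.
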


\begin{proof}
Let $C$ be a closed subset of $X$ and suppose that there exists $\widehat{x}:=[\left(x_n\right)_{n \, \in \, \NN}] \in \widehat C$ such that $y=\sh_{_{\mathcal{U},\,X}}(\widehat{x}) \not\in C$. Since $C$ is closed we can find $\epsilon >0$ such that the open ball $B_\epsilon(y)$ of radius $\epsilon$ centered at $y$ is contained in $X \setminus C$. By definition $y=\ulim_n x_n$, and thus $\left\{n \in \NN\; :\ x_n \in B_\epsilon(y) \right\} \in \mathcal{U}$. Therefore, $\left\{n \in \NN\; :\ x_n \in X \setminus C \right\} \in \mathcal{U}$, or equivalently, $\widehat{x} \in \widehat{X \setminus C}$. However, $\widehat{X \setminus C} = \widehat{X} \setminus \widehat{C}$, so $\widehat{x} \notin \widehat C$, which is a contradiction. Assume now that $\widehat C \subset \sh_{_{\mathcal{U},\,X}}^{-1}(C)$. If $C$ were not closed, then one could find $y \in X \setminus C$ such that, for every $n \in \NN$, we would have $B_{1/n}(y) \cap C \ \not= \ \emptyset.$ For each $n \in \NN$, take $c_n \in B_{1/n}(y) \,\cap\, C$. Since $y=\lim_n c_n$, then $y=\sh_{_{\mathcal{U},\,X}}([\left(c_n\right)_{n \, \in \, \NN}])$, so $\widehat{c}:=[\left(c_n\right)_{n \, \in \, \NN}] \in \sh_{_{\mathcal{U},\,X}}^{-1}(X \setminus C)$. Now, $\sh_{_{\mathcal{U},\,X}}^{-1}(X \setminus C)= \widehat{X}\setminus \sh_{_{\mathcal{U},\,X}}^{-1}(C)$, thus $\widehat{c} \notin \sh_{_{\mathcal{U},\,X}}^{-1}(C)$. As $\widehat C \subset \sh_{_{\mathcal{U},\,X}}^{-1}(C)$, we have $\widehat{X} \setminus \widehat C \supset \widehat{X} \setminus \sh_{_{\mathcal{U},\,X}}^{-1}(C)$, hence $\widehat{c} \not \in \widehat{C}$. Yet, by construction, $\widehat{c}=[\left(c_n\right)_{n \, \in \, \NN}]$ belongs to $\widehat{C}$. Concerning item (2), we are left to notice that $O \subset X$  is open if and only if $X \setminus O$ is closed, to recall that $\widehat{X \setminus O} \subset \sh_{_{\mathcal{U},\,X}}^{-1}(X \setminus O)$ by the previous item, and that
$$\widehat{X \setminus O} \subset \sh_{_{\mathcal{U},\,X}}^{-1}(X \setminus O) \quad \quad \Longleftrightarrow \quad \quad \sh_{_{\mathcal{U},\,X}}^{-1}(O) \subset \widehat{O}.$$
\end{proof}

\subsection{Ultraproducts and a finitely additive measure}

A set $\Lambda \subset \widehat X$ is said to be an \emph{ultraproduct set} (which we abbreviate to \emph{UP-set}) if there exists a sequence $\left(A_n\right)_{n \,\in\, \mathbb{N}}$ of subsets of $X$ such that
$$\Lambda = \Big(\prod_{n \,\in \,\NN} \,A_n\Big)_{\diagup \backsim}.$$
For UP-sets that are determined by sequences of Borel sets, which form an algebra we denote by $\widehat{\mathcal{B}}$, we have defined a finitely additive measure as follows: if $\Lambda = \Big(\prod_{n \,\in \,\NN} \,A_n\Big)_{\diagup \backsim}$ where $A_n \in \mathcal{B}$ for every $n \in \mathbb{N}$, then
\begin{equation}\label{def:medida Loeb}
\widehat \mu_{_\mathcal{U}}(\Lambda) \,:= \,\,\mathcal{U}\text{-}\lim_n \,\mu(A_n).
\end{equation}
The previous computation does not depend on the sequence $\left(A_n\right)_{n \,\in\, \mathbb{N}}$ of measurable subsets of $X$ whose product builds $\Lambda$ (cf. \cite{Goldblatt}).

\subsection{$\UU-$integrability}\label{sse:U-integrable}
Let $X$ be a compact metric space, $\mu$ be a probability measure defined on the $\sigma-$algebra $\mathcal{B}$ of the Borel subsets $X$ and $\mathcal{U}$ be a non-principal ultrafilter in $\mathbb{N}$. A sequence $(f_n)_{n \, \in \, \mathbb{N}}$ of $\mathcal{B}-$measurable functions $f_n: X \to \RR$ is said to be $\UU-$integrable if the following conditions hold:
\begin{enumerate}
\item For every $n \in \NN$, the map $f_n$ is $\mu-$integrable.
\medskip
\item $\mathcal{U} \text{-} \lim_n \,\int_X\,|f_n|\,d\mu$ is finite.
\medskip
\item If $\left(B_n\right)_{n \, \in \, \NN}$ is a sequence of elements of $\mathcal{B}$, then
$$\mathcal{U} \text{-} \lim_n \,\mu(B_n)=0 \quad \quad \Rightarrow \quad \quad \mathcal{U} \text{-} \lim_n \, \int_{B_n}\,|f_n|\,d\mu  =  0.$$
\end{enumerate}

For example, take $B \in \mathcal{B}$, the characteristic map $\chi_{_B}$ of $B$ and, for each natural number $n$, consider $f_n=\chi_{_B} \circ T^n$. Then
$$\mathcal{U} \text{-} \lim_n \,\int_X\,f_n\,d\mu = \mathcal{U} \text{-} \lim_n \, \mu(T^{-n}(B)) \in [0,1]$$
and, for every sequence $\left(B_n\right)_{n \, \in \, \NN}$ of elements of $\mathcal{B}$ satisfying $\mathcal{U} \text{-} \lim_n \,\mu(B_n)=0$, we have
$$0 \leq \mathcal{U} \text{-} \lim_n \, \int_{B_n}\,f_n\,d\mu  =  \mathcal{U} \text{-} \lim_n \, \mu(T^{-n}(B_n) \cap B_n) \leq \mathcal{U} \text{-} \lim_n \, \mu(B_n) = 0.$$

By a similar argument, we conclude that if $\varphi: X \to \RR$ is a measurable and bounded map (so, for every $n \in \NN_0$, the map $\varphi \circ T^n$ is $\mu-$integrable), then the sequence $(f_n)_{n \, \in \, \mathbb{N}}$ defined by
$$f_n := \frac{1}{n}\Big(\varphi + \varphi \circ T + \cdots + \varphi \circ T^{n-1}\Big)$$
is $\UU-$integrable.

On the contrary, the unbounded sequence of maps $f_n:[0,1] \to \RR$ defined by $f_n(x)= n$ if $x < 1/n$ and $f_n(x)=0$ otherwise is not $\UU-$integrable if we consider in $[0,1]$ the Lebesgue measure $m$. Indeed, for every $n \in \NN$, the map $f_n$ is Lebesgue-integrable and $\lim_n \,\int |f_n|\,d\,m = 1$. However, if $B_n=[0, \frac{1}{n}]$ for every $n \in \NN$, then $\lim_n \,m(B_n)=0$ but $\int_{B_n}\,|f_n|\,d\,m = 1.$ We observe that, in this case, 
$\int \,\lim_n f_n \,d\,m \neq \lim_n \int f_n\,d\,m$.

\section{Main results}

After selecting a non-principal ultrafilter in $\mathbb{N}$ and extending the finitely additive measure \eqref{def:medida Loeb} to a measure defined on a $\sigma-$algebra on the ultrapower $\widehat{X}$ that contains $\sh_{_{\mathcal{U},\,X}}^{-1}(\mathcal{B})$, we will consider integrable maps and the ultralimits of their Birkhoff averages in order to prove a non-standard pointwise convergence theorem for any probability measure.

There are two main difficulties to establish such a result. The first one is the lack of some version of the Dominated Convergence Theorem with respect to ultralimits. We will establish the following one.

\begin{maintheorem}\label{teo:main} If $g: X \to \RR$ is $\mu-$integrable and $(f_n)_{n \, \in \, \mathbb{N}}$ is a sequence of real-valued $\mu-$integrable functions with $|f_n| \leq g$ for every $n \in \NN$, then the map
$$[x_n] \,\in \,\widehat{X} \quad \mapsto \quad \mathcal{U}\text{-}\lim_n \,[f_n(x_n)]$$
is $\widehat \mu_{{_\mathcal{U}}}-$integrable and satisfies
$$\int_{\widehat X} \,\mathcal{U}\text{-}\lim_n \,[f_n(x_n)]\,d\,\widehat \mu_{{_\mathcal{U}}}([x_n]) \,\,=\, \,\mathcal{U}\text{-}\lim_n \, \int_X \,f_n(x) \,\,d\mu(x).$$
\end{maintheorem}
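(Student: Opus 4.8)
The plan is to reduce the statement to the classical Dominated Convergence Theorem on $X$ by transporting everything through the shadow map, after first checking that the putative limit function is well-defined and genuinely $\widehat\mu_{_\mathcal{U}}$-integrable. First I would observe that the map $\Phi([x_n]) := \mathcal{U}\text{-}\lim_n\,[f_n(x_n)]$ is well-defined: if $[x_n]=[y_n]$ then $\{n : x_n=y_n\}\in\mathcal{U}$, hence $\{n : f_n(x_n)=f_n(y_n)\}\in\mathcal{U}$, so the two $\mathcal{U}$-limits agree; moreover the domination $|f_n|\le g$ does not by itself bound $f_n(x_n)$ pointwise in $n$, but it does guarantee that $|\Phi([x_n])|\le \mathcal{U}\text{-}\lim_n\,[g(x_n)] =: G([x_n])$, and $G$ is exactly the function on $\widehat X$ obtained by the ultrapower extension of the single integrable map $g$.

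The crucial point is then to show that $G$ is $\widehat\mu_{_\mathcal{U}}$-integrable with $\int_{\widehat X} G\,d\widehat\mu_{_\mathcal{U}} = \int_X g\,d\mu$. Here I would use the layer-cake (distribution function) representation: for $t>0$, the superlevel set $\{[x_n] : G([x_n])>t\}$ is, up to $\widehat\mu_{_\mathcal{U}}$-null sets, the UP-set $\big(\prod_n \{x : g(x)>t\}\big)_{\diagup\backsim}$, whose $\widehat\mu_{_\mathcal{U}}$-measure is $\mathcal{U}\text{-}\lim_n \mu(\{g>t\}) = \mu(\{g>t\})$ since the defining sequence is constant. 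Integrating in $t$ via $\int G\,d\widehat\mu_{_\mathcal{U}} = \int_0^\infty \widehat\mu_{_\mathcal{U}}(G>t)\,dt$ (and treating the negative part symmetrically) gives $\int_{\widehat X} G\,d\widehat\mu_{_\mathcal{U}} = \int_X g\,d\mu < \infty$. The matching of superlevel sets with UP-sets, and the fact that $\widehat\mu_{_\mathcal{U}}$ (as the measure extending \eqref{def:medida Loeb}) assigns them the right mass, is where Lemma \ref{le:openclosedsets} and the construction recalled in Section 2 do the work; this identification is the main obstacle, since one must be careful that $G$ is measurable for the completed $\sigma$-algebra containing $\sh_{_{\mathcal{U},\,X}}^{-1}(\mathcal{B})$ and that the exceptional sets arising from the equivalence relation $\backsim$ are null.

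Granting integrability of $G$, and hence of $\Phi$ by domination, the identity
$$\int_{\widehat X}\,\Phi([x_n])\,d\widehat\mu_{_\mathcal{U}}([x_n]) \;=\; \mathcal{U}\text{-}\lim_n \int_X f_n\,d\mu$$
follows by splitting $f_n = f_n^+ - f_n^-$ and handling each sign separately, so I may assume $0\le f_n\le g$. Again via the layer-cake formula, $\int_{\widehat X}\Phi\,d\widehat\mu_{_\mathcal{U}} = \int_0^\infty \widehat\mu_{_\mathcal{U}}(\Phi>t)\,dt$, and the superlevel set $\{\Phi>t\}$ agrees modulo null sets with the UP-set $\big(\prod_n\{x : f_n(x)>t\}\big)_{\diagup\backsim}$, so $\widehat\mu_{_\mathcal{U}}(\Phi>t) = \mathcal{U}\text{-}\lim_n \mu(\{f_n>t\})$. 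Since $0\le \mu(\{f_n>t\}) \le \mu(\{g>t\})$ with $t\mapsto\mu(\{g>t\})\in L^1(0,\infty)$, the dominated convergence theorem for ordinary integrals (with respect to the measure $dt$ on $(0,\infty)$, applied to ultralimits via the fact that $\mathcal{U}$-limits of bounded sequences are sandwiched by $\liminf$ and $\limsup$) lets me pull the $\mathcal{U}$-limit outside the $t$-integral:
$$\int_0^\infty \mathcal{U}\text{-}\lim_n \mu(\{f_n>t\})\,dt \;=\; \mathcal{U}\text{-}\lim_n \int_0^\infty \mu(\{f_n>t\})\,dt \;=\; \mathcal{U}\text{-}\lim_n \int_X f_n\,d\mu.$$
Reassembling the positive and negative parts yields the theorem. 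The delicate step throughout is the measure-theoretic identification of superlevel sets of ultrapower-type functions with UP-sets up to $\widehat\mu_{_\mathcal{U}}$-null sets; the interchange of $\mathcal{U}\text{-}\lim$ with the $t$-integral is then a routine consequence of the classical dominated convergence theorem together with the $\liminf$/$\limsup$ sandwich for ultralimits.
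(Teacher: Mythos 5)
Your strategy --- pass to distribution functions via the layer-cake formula and identify superlevel sets of $\Phi([x_n])=\mathcal{U}\text{-}\lim_n f_n(x_n)$ with UP-sets --- is genuinely different from the paper's proof, which first establishes the identity for bounded $\UU$-integrable sequences by approximating $\widehat f$ with simple functions supported on UP-sets (Proposition~\ref{prop:auxiliary}) and then removes boundedness by truncating at level $M$ and using the domination to control $\int_{\{f_n\ge M\}}f_n\,d\mu$ uniformly in $n$. Your superlevel-set identification is sound up to the expected caveats: one has $\{\Phi>t\}\subseteq\big(\prod_n\{f_n>t\}\big)_{\diagup\backsim}\subseteq\{\Phi\ge t\}$, the discrepancy lies in $\{\Phi=t\}$, which is $\widehat\mu_{_\mathcal{U}}$-null for all but countably many $t$, and writing $\{\Phi\ge t\}=\bigcap_k\big(\prod_n\{f_n>t-\frac1k\}\big)_{\diagup\backsim}$ even delivers the $L(\widehat{\mathcal{B}})$-measurability of $\Phi$ that you flag as a concern. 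That part of the argument can be made to work.

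The genuine gap is the last step, the interchange $\int_0^\infty\mathcal{U}\text{-}\lim_n\mu(\{f_n>t\})\,dt=\mathcal{U}\text{-}\lim_n\int_0^\infty\mu(\{f_n>t\})\,dt$, which you call ``routine'' and attribute to the classical dominated convergence theorem plus the $\liminf$/$\limsup$ sandwich. That justification does not work: classical DCT requires pointwise convergence of the integrands, which you do not have, and the sandwich only gives $\int\liminf_n h_n\le\mathcal{U}\text{-}\lim_n\int h_n\le\int\limsup_n h_n$, which does not locate $\int\mathcal{U}\text{-}\lim_n h_n$. Worse, ``ultralimit and integral over a standard measure space commute under domination'' is precisely the false statement the theorem is built to circumvent --- the paper's binary-digit example exhibits uniformly bounded measurable $h_n$ on $[0,1]$ whose pointwise ultralimit is not even measurable --- so your reduction lands on an instance of the very problem you started with. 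What actually saves the step, and what must be said explicitly, is that here $h_n(t)=\mu(\{f_n>t\})$ is non-increasing in $t$ and bounded by $1$: on $[0,T]$ the upper and lower Riemann sums of $h_n$ over a fixed partition differ by at most the mesh, finite sums commute with $\mathcal{U}\text{-}\lim_n$, and the ultralimit of monotone functions is monotone, so $\mathcal{U}\text{-}\lim_n\int_0^T h_n\,dt=\int_0^T\mathcal{U}\text{-}\lim_n h_n\,dt$; the tails $\int_T^\infty h_n\,dt\le\int_T^\infty\mu(\{g>t\})\,dt$ are then uniformly small by the integrability of $g$. With that Helly-type argument supplied your proof closes; without it the key step is unsupported.
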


\medskip

This result is a consequence of a more general statement concerning bounded $\UU-$integrable sequences $(f_n)_{n \, \in \, \mathbb{N}}$ of real-valued functions which we will prove in Section~\ref{se:prova-teorema-A}.

\medskip

\begin{maintheorem}\label{teo:second-main} If $(f_n)_{n \, \in \, \mathbb{N}}$ is a $\UU-$integrable sequence of real-valued functions, then the map
$$[x_n] \,\in \,\widehat{X} \quad \mapsto \quad \mathcal{U}\text{-}\lim_n \,[f_n(x_n)]$$
is $\widehat \mu_{{_\mathcal{U}}}-$integrable and satisfies
$$\int_{\widehat X} \,\mathcal{U}\text{-}\lim_n \,[f_n(x_n)]\,d\,\widehat \mu_{{_\mathcal{U}}}([x_n]) \,\,=\, \,\mathcal{U}\text{-}\lim_n \, \int_X \,f_n(x) \,\,d\mu(x).$$
\end{maintheorem}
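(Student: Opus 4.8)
The plan is to reduce the general $\UU$-integrable case to the bounded case of Theorem~\ref{teo:main} by a truncation argument. First I would fix a $\UU$-integrable sequence $(f_n)_{n\in\NN}$ and, for each $N\in\NN$, set $f_n^{(N)} := \max\{-N,\min\{N,f_n\}\}$, the truncation of $f_n$ at level $N$. Each sequence $(f_n^{(N)})_{n\in\NN}$ is bounded (hence dominated by the integrable constant $N$), so Theorem~\ref{teo:main} applies: the map $F^{(N)}([x_n]) := \ulim_n\,[f_n^{(N)}(x_n)]$ is $\widehat\mu_{_\UU}$-integrable and $\int_{\widehat X} F^{(N)}\,d\widehat\mu_{_\UU} = \ulim_n \int_X f_n^{(N)}\,d\mu$. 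The target object is $F([x_n]) := \ulim_n\,[f_n(x_n)]$, which is well defined as an element of $\RR$ once we check it is finite $\widehat\mu_{_\UU}$-almost everywhere; I would establish this by controlling the UP-sets where $|f_n|$ is large, using condition (3) of $\UU$-integrability together with the Chebyshev-type bound $\mu(\{|f_n|>M\}) \le \frac1M\int_X|f_n|\,d\mu$ and condition (2).

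Next I would show $F^{(N)} \to F$ in an appropriate sense as $N\to\infty$, together with a uniform integrability estimate that lets us pass to the limit inside the integral. The key inequality is
\[
\int_{\widehat X} \bigl|F - F^{(N)}\bigr|\,d\widehat\mu_{_\UU} \;\le\; \ulim_n \int_{\{|f_n|>N\}} |f_n|\,d\mu,
\]
whose right-hand side I would argue tends to $0$ as $N\to\infty$: indeed $\ulim_n\mu(\{|f_n|>N\})$ is small for large $N$ by Chebyshev and condition (2), and then condition (3) of $\UU$-integrability — applied to the sequence $B_n = \{|f_n|>N\}$ — forces the tail integrals to be uniformly $\UU$-small. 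To make the left-hand side rigorous I would first observe that $F - F^{(N)}$ is the $\UU$-ultralimit of $[f_n - f_n^{(N)}]$ and that $|f_n - f_n^{(N)}| = (|f_n|-N)^+ \le |f_n|\,\chi_{\{|f_n|>N\}}$, so the pointwise estimate on $\widehat X$ descends from the pointwise estimate on $X$, and the integral bound comes from Theorem~\ref{teo:main} applied to the bounded sequences $\min\{|f_n-f_n^{(N)}|,\,M\}$ followed by letting $M\to\infty$ via monotone convergence for $\widehat\mu_{_\UU}$.

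Finally I would assemble the pieces. Since $\int_{\widehat X}|F - F^{(N)}|\,d\widehat\mu_{_\UU}\to 0$, the limit $F$ is $\widehat\mu_{_\UU}$-integrable (being an $L^1$-limit of the integrable $F^{(N)}$) and
\[
\int_{\widehat X} F\,d\widehat\mu_{_\UU} \;=\; \lim_{N\to\infty}\int_{\widehat X} F^{(N)}\,d\widehat\mu_{_\UU} \;=\; \lim_{N\to\infty}\,\ulim_n \int_X f_n^{(N)}\,d\mu.
\]
It remains to interchange $\lim_N$ and $\ulim_n$ on the right and identify the result with $\ulim_n\int_X f_n\,d\mu$; this follows because $\bigl|\int_X f_n^{(N)}\,d\mu - \int_X f_n\,d\mu\bigr| \le \int_{\{|f_n|>N\}}|f_n|\,d\mu$, and the $\ulim_n$ of this bound goes to $0$ uniformly enough in $N$ by the same condition (3) argument, so the convergence $\ulim_n\int_X f_n^{(N)}\,d\mu \to \ulim_n\int_X f_n\,d\mu$ is actually uniform in the approximation. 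The main obstacle I anticipate is making the ``uniform $\UU$-smallness of the tails'' precise: condition (3) is stated as an implication about a single sequence $(B_n)_n$, so I will need to argue carefully — perhaps via a diagonal or contradiction argument against a putative $\varepsilon>0$ — that $\sup_N$ (or rather the relevant $\limsup$) of $\ulim_n\int_{\{|f_n|>N\}}|f_n|\,d\mu$ genuinely vanishes, rather than merely each term being controllable.
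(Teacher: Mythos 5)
Your proposal follows essentially the same route as the paper: reduce to the bounded case via truncation, then show that $\UU$-integrability forces the truncation error $\ulim_n\int_{\{|f_n|>N\}}|f_n|\,d\mu$ to vanish as $N\to\infty$ uniformly in $n$ (in the $\UU$-sense), which is exactly the ``uniformity of the tails'' the paper isolates as the only new ingredient beyond Theorem~\ref{teo:main}. The step you flag as delicate --- upgrading condition (3), stated for a single sequence $(B_n)_n$, to this uniform statement --- is handled in the paper by precisely the contradiction/diagonal argument you anticipate: taking $\gamma_n$ to be the least threshold $M$ with $\int_{A_{n,M}}f_n\,d\mu<\varepsilon$ and applying condition (3) to $B_n=A_{n,L_n}$ with $L_n\to+\infty$ to rule out $\ulim_n\gamma_n=+\infty$.
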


\medskip

The second difficulty concerns the measurability of the ultralimit: although the pointwise convergence with respect to an ultrafilter of a sequence of measurable functions is guaranteed, its ultralimit may not be measurable. Let us see an example. Consider a compact metric space $X$, a $\sigma-$algebra of subsets of $X$, an ultrafilter $\mathcal{U}$ and a sequence $(f_n)_{n \, \in \, \mathbb{N}}$ of measurable functions $f_n:X \to \mathbb{R}$. If $\mathcal{U}$ is principal, generated by $\{n_0\}$, then the $\mathcal{U} \text{-} \lim_n \,f_n$ is $f_{n_0}$, so it is measurable. Otherwise, if $\mathcal{U}$ is non-principal, consider $X=[0,1]$ with the usual topology and the Lebesgue measure. For each $n \in \mathbb{N}$ and $x \in X$, define
$$f_n(x) = \text{the $n$th digit in the infinite binary expansion of $x$}.$$
Then the $\mathcal{U} \text{-} \lim_n \,f_n$ sends $x \in X$ to $1$ if and only if the set
$$\{n \in \mathbb{N} \colon \text{the $n$th bit in the infinite binary expansion of $x$ is 1}\}$$
is in $\mathcal{U}$. In other words, if we identify $x$
via its binary expansion with a sequence of $0$'s and $1$'s and if we then regard that sequence as the characteristic function of a subset of $\mathbb{N}$, then the $\mathcal{U} \text{-} \lim_n \,f_n$, mapping subsets of $\mathbb{N}$ to $\{0,1\}$, is just the characteristic function of $\mathcal{U}$. However, a theorem of Sierpinski \cite{Sierpinski} \footnote{We thank Andreas Blass for calling our attention to this reference.}
asserts that this is never Lebesgue measurable when $\mathcal{U}$ is a non-principal ultrafilter. To overcome this problem, we summon the ultrapower extension of the space $X$ with respect to a fixed ultrafilter. This way, we are able to prove the following property of the Birkhoff averages of bounded measurable potentials $\varphi$ as a direct consequence of Theorem~\ref{teo:main} when applied to the sequence
\begin{eqnarray}\label{eq:averages}
(f_n)_{n \, \in \, \NN} = \Big(\frac 1 n \,\sum_{j=0}^{n-1}\,\varphi \circ T^j\Big)_{n \, \in \, \NN}
\end{eqnarray}
whenever $(f_n)_{n \, \in \, \NN}$ is $\UU-$integrable.

\begin{maincorollary}\label{cor:main}
Let $X$ be a compact metric space, $\mu$ be a probability measure defined on the Borel subsets of $X$ and $\mathcal{U}$ be a non-principal ultrafilter in $\NN$. Consider a measurable map $T: X \to X$ and a measurable bounded function $\varphi: X \to \RR$. Then there exists a $\widehat \mu_{_\mathcal{U}}-$integrable map $\widehat \varphi_{_\mathcal{U}}: \widehat X \to \RR$ satisfying:

\begin{enumerate}
\item $\mathcal{U} \text{-} \lim_n\,\frac 1 n \,\sum_{j=0}^{n-1}\,\varphi \circ T^j(x)  =  \widehat \varphi_{_\mathcal{U}} \circ \iota(x)$ for every $x \in X$. %for $\mu-$almost every $x \in X$.
\bigskip
\item $\widehat \varphi_{_\mathcal{U}}\circ \iota(T(x)) = \widehat \varphi_{_\mathcal{U}}\circ \iota(x)$ for every $x \in X$. %for $\mu-$almost every $x \in X$.
\bigskip
\item $\ulim_n \,\frac 1 n \,\sum_{j=0}^{n-1}\, \int_{_{X}} \,\varphi\circ T^j \, d\mu = \int_{_{\widehat X}}\,\widehat \varphi_{_\mathcal{U}}\,d\,\widehat \mu_{_\mathcal{U}}.$
\end{enumerate}
\end{maincorollary}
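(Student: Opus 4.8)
The plan is to obtain Corollary~\ref{cor:main} as a direct application of Theorem~\ref{teo:main} to the sequence of Birkhoff averages in \eqref{eq:averages}. First I would set $f_n := \frac1n\sum_{j=0}^{n-1}\varphi\circ T^j$. Since $\varphi$ is measurable and bounded, say $|\varphi|\le M$, each $f_n$ is measurable with $|f_n|\le M$, so the constant function $g\equiv M$ dominates the sequence and is $\mu$-integrable because $\mu$ is a probability measure. Thus Theorem~\ref{teo:main} applies and yields that the map
$$\widehat\varphi_{_\mathcal{U}}\colon [x_n]\in\widehat X\ \mapsto\ \mathcal{U}\text{-}\lim_n\,[f_n(x_n)]$$
is $\widehat\mu_{_\mathcal{U}}$-integrable and satisfies
$$\int_{\widehat X}\widehat\varphi_{_\mathcal{U}}\,d\widehat\mu_{_\mathcal{U}}\ =\ \mathcal{U}\text{-}\lim_n\int_X f_n\,d\mu\ =\ \mathcal{U}\text{-}\lim_n\,\frac1n\sum_{j=0}^{n-1}\int_X\varphi\circ T^j\,d\mu,$$
which is exactly item (3). (One should note that $\widehat\varphi_{_\mathcal{U}}$ is real-valued, not merely extended-real-valued, because the dominated sequence forces the ultralimit into $[-M,M]$.)

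For item (1), I would evaluate $\widehat\varphi_{_\mathcal{U}}$ at a point $\iota(x)=[x]$ in the image of the embedding, i.e.\ on the equivalence class of the constant sequence $(x,x,x,\dots)$. Then $f_n(x_n)=f_n(x)=\frac1n\sum_{j=0}^{n-1}\varphi\circ T^j(x)$ for every $n$, so
$$\widehat\varphi_{_\mathcal{U}}\circ\iota(x)\ =\ \mathcal{U}\text{-}\lim_n\,[f_n(x)]\ =\ \mathcal{U}\text{-}\lim_n\,\frac1n\sum_{j=0}^{n-1}\varphi\circ T^j(x),$$
where the last ultralimit is taken in $\RR$ (or rather in the compact interval $[-M,M]$), and it exists as discussed in Section~\ref{sse:ultralimits}. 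This is item (1).

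Item (2) is the Birkhoff-type invariance and is the place where a small computation is needed, though it is entirely standard. Writing $S_n\varphi(x):=\sum_{j=0}^{n-1}\varphi\circ T^j(x)$, one has the telescoping identity
$$S_n\varphi(T(x))-S_n\varphi(x)\ =\ \varphi\circ T^n(x)-\varphi(x),$$
hence $\bigl|\tfrac1n S_n\varphi(T(x))-\tfrac1n S_n\varphi(x)\bigr|\le \tfrac{2M}{n}\to 0$ as $n\to\infty$. Since the ordinary limit of the difference is $0$, its $\mathcal{U}$-limit is also $0$ for every non-principal $\mathcal{U}$ (an ultralimit agrees with the ordinary limit when the latter exists), and $\mathcal{U}$-limits respect this since $\mathcal{U}\text{-}\lim$ is linear on bounded sequences. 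Therefore
$$\widehat\varphi_{_\mathcal{U}}\circ\iota(T(x))\ =\ \mathcal{U}\text{-}\lim_n\,\tfrac1n S_n\varphi(T(x))\ =\ \mathcal{U}\text{-}\lim_n\,\tfrac1n S_n\varphi(x)\ =\ \widehat\varphi_{_\mathcal{U}}\circ\iota(x),$$
which gives item (2) and completes the proof.

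I do not anticipate a genuine obstacle here: the only subtlety is making sure the hypotheses of Theorem~\ref{teo:main} are verified (which is immediate from boundedness of $\varphi$ and finiteness of $\mu$) and that the ultralimits in items (1)--(3) are interpreted consistently — those taken pointwise in $\RR$ versus the one defining the integral over $\widehat X$ — but these are reconciled precisely by the statement of Theorem~\ref{teo:main}. The mild bookkeeping in item (2) with the telescoping sum is the most computational part, and it is routine.
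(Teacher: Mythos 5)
Your proposal is correct and follows exactly the route the paper intends: the paper states Corollary~\ref{cor:main} as a direct consequence of Theorem~\ref{teo:main} applied to the Birkhoff averages \eqref{eq:averages}, which are dominated by the constant $\sup|\varphi|$, and you supply the same routine details (evaluation at constant sequences for item (1), the telescoping identity for item (2), and the integral identity of Theorem~\ref{teo:main} for item (3)). No gaps.
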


\medskip

As an immediate consequence of Corollary \ref{cor:main} and \cite{Katznelson-Weiss} we deduce that, if $\mu$ is $T-$invariant, then, given a measurable bounded function $\varphi: X \to \RR$ and an ultrafilter $\mathcal{U}$, we have:
\begin{itemize}
\item[(a)] The maps $\widehat \varphi_{_\mathcal{U}}\circ \iota$ and $\widetilde{\varphi}$ (given by the Ergodic Theorem applied to $T$ and $\varphi$) coincide $\mu$ almost everywhere.
\medskip
\item[(b)] $\int_{_{\widehat X}}\,\widehat \varphi_{_\mathcal{U}}\,d\,\widehat \mu_{_\mathcal{U}} = \int_{_{X}} \,\varphi \, d\mu.$
\medskip
\item[(c)] $\int_X \,\mathcal{U} \text{-} \lim_n\,\,\,\frac 1 n \,\sum_{j=0}^{n-1}\,\varphi \circ T^j(x)\,\,d\mu(x) = \int_X \,\varphi(x) \,\,d\mu(x).$
\end{itemize}

\medskip

A natural question we may now address concerns the impact of the choice of the ultrafilter $\UU$. %Is there an average Birkhoff limit in the space of all non-principal ultrafilters?
Using the shift map on the space $\beta \NN$ of all non-principle ultrafilters in $\NN$ and a suitable Borel shift-invariant probability measure on $\beta \NN$ of all non-principle ultrafilters, we will show in Section~\ref{se:proof-of-Corollary-B} that there exists in $\beta \NN$ a space mean  of all the ultralimits of the Birkhoff averages \eqref{eq:averages}.

\section{Construction of an ultrapower measure}

In this section we recall how to extend $\widehat \mu_{_\mathcal{U}}$ to a $\sigma-$algebra containing $\widehat{\mathcal{B}}$.

\begin{lemma}\cite[Theorem 11.10.1]{Goldblatt}\label{le:fip}
Let $(\Lambda_n)_{n\, \in \,\mathbb{N}}$ be a decreasing sequence of non-empty UP-sets $\Lambda_n$. Then $\bigcap_{n \,\in \,\NN}\,\,\Lambda_n \not= \emptyset.$ Therefore, any cover of a UP-set by countably many UP-sets has a finite subcover.
\end{lemma}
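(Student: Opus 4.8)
The plan is to prove the first assertion --- that the ultrapower is countably saturated over intersections of nested non-empty UP-sets --- and then obtain the covering statement as a formal corollary. Throughout, write $\Lambda_m = \big(\prod_{k \in \NN} A^{(m)}_k\big)_{\diagup\backsim}$, and recall (or verify in one line) that a UP-set $\big(\prod_k A_k\big)_{\diagup\backsim}$ is non-empty if and only if $\{k \in \NN : A_k \neq \emptyset\} \in \mathcal{U}$, that a finite intersection $\bigcap_{j\le m}\big(\prod_k A^{(j)}_k\big)_{\diagup\backsim}$ equals $\big(\prod_k \bigcap_{j\le m} A^{(j)}_k\big)_{\diagup\backsim}$, and that the complement $\widehat X \setminus \big(\prod_k A_k\big)_{\diagup\backsim}$ equals $\big(\prod_k (X\setminus A_k)\big)_{\diagup\backsim}$ because $\mathcal{U}$ is an ultrafilter.

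First I would reduce to the case of decreasing representing data. Setting $B^{(m)}_k := \bigcap_{j\le m} A^{(j)}_k$, the identity above and the hypothesis that $(\Lambda_m)_m$ is decreasing give $\big(\prod_k B^{(m)}_k\big)_{\diagup\backsim} = \bigcap_{j\le m}\Lambda_j = \Lambda_m$, so after replacing $A^{(m)}_k$ by $B^{(m)}_k$ I may assume $A^{(m+1)}_k \subseteq A^{(m)}_k$ for all $k,m$. Since each $\Lambda_m$ is non-empty, $S_m := \{k : A^{(m)}_k \neq \emptyset\} \in \mathcal{U}$, and intersecting $S_m$ with $S_0,\dots,S_{m-1}$ I may also assume $(S_m)_m$ is decreasing (this does not change $\Lambda_m$ since a representing sequence may be shrunk on the complement of a set in $\mathcal{U}$).

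The key step is to invoke the countable incompleteness of a non-principal ultrafilter: the co-finite sets $U_m := \{k \in \NN : k \ge m\}$ all belong to $\mathcal{U}$, are decreasing, and have empty intersection. Put $W_m := S_m \cap U_m \in \mathcal{U}$, a decreasing sequence with $\bigcap_m W_m = \emptyset$. Now build $(x_k)_{k\in\NN}$ in $X$: for $k \notin W_0$ choose $x_k \in X$ arbitrarily; for $k \in W_0$ let $n(k)$ be the largest $m$ with $k \in W_m$ (finite because $\bigcap_m W_m = \emptyset$) and choose $x_k \in A^{(n(k))}_k$, which is non-empty since $k \in S_{n(k)}$. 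Fix $m \in \NN$; for every $k \in W_m$ one has $n(k) \ge m$, hence $x_k \in A^{(n(k))}_k \subseteq A^{(m)}_k$, so $W_m \subseteq \{k : x_k \in A^{(m)}_k\} \in \mathcal{U}$, i.e. $[x_k] \in \Lambda_m$. As $m$ was arbitrary, $[x_k] \in \bigcap_m \Lambda_m \neq \emptyset$. For the covering statement, assume a UP-set $\Lambda$ is covered by UP-sets $(\Gamma_n)_{n\in\NN}$ with no finite subcover. Each $\Lambda_N := \Lambda \cap \bigcap_{n\le N}(\widehat X \setminus \Gamma_n)$ is then a UP-set (finite intersection of UP-sets, using the complement identity), non-empty precisely because no finite subcover exists, and the sequence $(\Lambda_N)_N$ is decreasing; by the first part $\bigcap_N \Lambda_N \neq \emptyset$, yet any point of this intersection lies in $\Lambda$ and in no $\Gamma_n$, a contradiction.

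I expect the only real obstacle to be bookkeeping: checking carefully that the two reductions (to decreasing sequences $A^{(m)}_k$ and to decreasing index sets $S_m$) genuinely leave the sets $\Lambda_m$ unchanged, and handling the complements and finite intersections of UP-sets at the level of representing sequences. Once the countable incompleteness of $\mathcal{U}$ is in hand, the diagonal construction of $(x_k)_k$ and the deduction of the covering property are routine.
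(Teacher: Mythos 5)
Your proof is correct: the diagonal construction via the countable incompleteness of a non-principal ultrafilter is precisely the standard countable-saturation argument, which is how the result is proved in the reference \cite[Theorem 11.10.1]{Goldblatt} that the paper cites in place of giving its own proof, and your deduction of the covering statement from it (via complements and finite intersections of UP-sets) is the same formal corollary. Nothing further is needed.
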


Since $\widehat \mu_{_\mathcal{U}}$ is finitely additive and for any disjoint union $\Lambda \ = \ \dot{\bigcup}_{k \, \in \, \mathbb{N}} \,\Lambda_k$ of UP-sets $\Lambda_k$ there is $k_0 \in \NN$ such that $\Lambda  \ = \  \Lambda_1\dot{\cup} \ldots \dot{\cup}\,\Lambda_{k_0}$, with $\Lambda_k =\emptyset$ for every $k > k_0$, we conclude that the compatibility condition
\begin{equation}\label{caratheodor_cond}
\Lambda = \dot{\bigcup}_{k \, \in \, \mathbb{N}} \,\Lambda_k \quad \quad \Rightarrow \quad \quad \widehat \mu_{_\mathcal{U}}(\Lambda) = \sum_{k=1}^\infty\,\,\widehat \mu_{_\mathcal{U}}(\Lambda_k).
\end{equation}
is valid. Consequently,

\begin{proposition}\cite{AB1998}\label{LBcomplete}
The finitely additive measure $\widehat \mu_{_\mathcal{U}}$ can be extended to a measure, we will keep denoting by $\widehat {\mu}_{_\mathcal{U}}$, on a $\sigma-$algebra $L(\widehat{\mathcal{B}})$ which satisfies:
\begin{enumerate}
\item $L(\widehat{\mathcal{B}}) \supset \widehat{\mathcal{B}}$.
\item $L(\widehat{\mathcal{B}})$ is a complete $\sigma-$algebra, that is, given $\Gamma \subset \widehat X$, if there are
  $\Gamma_1, \, \Gamma_2 \in L(\widehat{\mathcal{B}})$ such that $\Gamma_1 \subset \Gamma \subset \Gamma_2$ and $\widehat {\mu}_{_\mathcal{U}}(\Gamma_1)=\widehat {\mu}_{_\mathcal{U}}(\Gamma_2)$, then $\Gamma \in L(\widehat{\mathcal{B}}).$
\end{enumerate}
\end{proposition}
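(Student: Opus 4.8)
The plan is to run the standard Carath\'eodory--Loeb extension procedure, which turns a finitely additive measure on an algebra satisfying the countable compatibility condition \eqref{caratheodor_cond} into a genuine $\sigma$-additive measure on a complete $\sigma$-algebra. Concretely, I would first use $\widehat\mu_{_\mathcal{U}}$ to define an outer measure $(\widehat\mu_{_\mathcal{U}})^*$ on all subsets $\Gamma\subset\widehat X$ by
\[
(\widehat\mu_{_\mathcal{U}})^*(\Gamma)\,:=\,\inf\Big\{\sum_{k\,\in\,\NN}\widehat\mu_{_\mathcal{U}}(\Lambda_k)\colon \Lambda_k\in\widehat{\mathcal{B}},\ \Gamma\subset\bigcup_{k\,\in\,\NN}\Lambda_k\Big\}.
\]
One checks the three axioms of an outer measure (monotonicity, countable subadditivity, and $(\widehat\mu_{_\mathcal{U}})^*(\emptyset)=0$), which are automatic from the definition once one knows $\widehat{\mathcal{B}}$ is an algebra. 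Then define $L(\widehat{\mathcal{B}})$ to be the collection of $(\widehat\mu_{_\mathcal{U}})^*$-measurable sets in the sense of Carath\'eodory, i.e.\ those $\Gamma$ for which $(\widehat\mu_{_\mathcal{U}})^*(E)=(\widehat\mu_{_\mathcal{U}})^*(E\cap\Gamma)+(\widehat\mu_{_\mathcal{U}})^*(E\setminus\Gamma)$ for every test set $E\subset\widehat X$. By Carath\'eodory's theorem $L(\widehat{\mathcal{B}})$ is a $\sigma$-algebra and the restriction of $(\widehat\mu_{_\mathcal{U}})^*$ to it is a $\sigma$-additive measure; moreover this measure is automatically complete, since any subset of a $(\widehat\mu_{_\mathcal{U}})^*$-null set has outer measure $0$ and hence is trivially Carath\'eodory-measurable, which also yields the ``squeezing'' characterization of completeness stated in item (2) (if $\Gamma_1\subset\Gamma\subset\Gamma_2$ with $\widehat\mu_{_\mathcal{U}}(\Gamma_1)=\widehat\mu_{_\mathcal{U}}(\Gamma_2)$, then $\Gamma\setminus\Gamma_1$ sits inside the null set $\Gamma_2\setminus\Gamma_1$, so $\Gamma=\Gamma_1\cup(\Gamma\setminus\Gamma_1)\in L(\widehat{\mathcal{B}})$).

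The content that is specific to this setting, rather than generic measure theory, is the verification that $\widehat{\mathcal{B}}\subset L(\widehat{\mathcal{B}})$ and that $(\widehat\mu_{_\mathcal{U}})^*$ restricted to $\widehat{\mathcal{B}}$ agrees with $\widehat\mu_{_\mathcal{U}}$. For the agreement one inequality is immediate ($\Lambda$ covers itself), and the other is exactly where condition \eqref{caratheodor_cond} enters: given any countable cover of $\Lambda\in\widehat{\mathcal{B}}$ by UP-sets $\Lambda_k\in\widehat{\mathcal{B}}$, Lemma~\ref{le:fip} reduces it to a finite subcover, after which finite additivity and monotonicity of $\widehat\mu_{_\mathcal{U}}$ give $\widehat\mu_{_\mathcal{U}}(\Lambda)\le\sum_k\widehat\mu_{_\mathcal{U}}(\Lambda_k)$. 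The same finite-subcover trick shows every $\Lambda\in\widehat{\mathcal{B}}$ splits additively any test set, hence $\widehat{\mathcal{B}}\subset L(\widehat{\mathcal{B}})$, giving item (1).

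I expect the main obstacle to be purely expository: since the statement is attributed to \cite{AB1998} and the compatibility condition \eqref{caratheodor_cond} has already been checked in the paragraph preceding the proposition, the honest route is to cite the Loeb-measure construction and indicate that it applies verbatim once \eqref{caratheodor_cond} is in hand, rather than to reproduce Carath\'eodory's argument in full. The only genuinely non-trivial input is Lemma~\ref{le:fip} (the finite intersection property of decreasing UP-sets), which is what makes the finitely additive premeasure countably additive in disguise and thus feeds the extension machine; everything after that is the textbook outer-measure construction, so the proof can be kept to a short paragraph pointing to \cite{AB1998, Goldblatt}.
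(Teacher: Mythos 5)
Your proposal is correct and takes essentially the same route as the paper: the paper verifies the compatibility condition \eqref{caratheodor_cond} (via Lemma~\ref{le:fip}, which forces any countable disjoint union of UP-sets to be essentially finite) and then simply invokes the Carath\'eodory/Loeb extension machinery from \cite{AB1998} rather than reproducing it. Your identification of Lemma~\ref{le:fip} as the only input specific to this setting, with everything else being the textbook outer-measure construction, matches the paper exactly.
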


Under the assumption that $X$ is a metric space and $\mu$ a Borel probability measure, we know that $\mu$ is a regular measure (cf. \cite[Corollary 6.1.1]{Walters}). More precisely, for any Borel set $B \in \mathcal{B}$ there is a decreasing sequence $(O_n)_{n \, \in \,\NN}$ of open sets and an increasing sequence $(C_n)_{n \, \in \,\NN}$ of closed sets such that
\begin{equation}\label{caractborelian}
C_n \subset B  \subset O_n \quad \quad \text{and} \quad \quad \mu(O_n)-\mu(C_n) < \frac 1n \quad \quad \forall \,\,n \in \NN.
\end{equation}
From this property we deduce that the $\sigma-$algebra $\sh_{_{\mathcal{U},\,X}}^{-1}(\mathcal{B})$ is contained in $L(\widehat{\mathcal{B}})$ and that $\sh_{_{\mathcal{U},\,X}}$ is a measure preserving map.

\begin{lemma}\label{sh-measurability}
For every Borel set $B \in \mathcal{B}$ we have $\sh_{_{\mathcal{U},\,X}}^{-1}(B) \in L(\widehat{\mathcal{B}})$ and $\widehat {\mu}_{_\mathcal{U}}(\sh_{_{\mathcal{U},\,X}}^{-1}(B)) = \mu(B).$
\end{lemma}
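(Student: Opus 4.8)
The plan is to use the regularity property \eqref{caractborelian} together with the completeness of $L(\widehat{\mathcal{B}})$ asserted in Proposition~\ref{LBcomplete}(2). Fix a Borel set $B \in \mathcal{B}$ and take the sequences $(O_n)_{n\in\NN}$ of open sets and $(C_n)_{n\in\NN}$ of closed sets provided by \eqref{caractborelian}, so that $C_n \subset B \subset O_n$ and $\mu(O_n)-\mu(C_n) < 1/n$ for every $n$. By Lemma~\ref{le:openclosedsets} we have $\widehat{C_n} \subset \sh_{_{\mathcal{U},\,X}}^{-1}(C_n)$ and $\sh_{_{\mathcal{U},\,X}}^{-1}(O_n) \subset \widehat{O_n}$, and applying $\sh_{_{\mathcal{U},\,X}}^{-1}$ to the inclusions $C_n \subset B \subset O_n$ gives $\sh_{_{\mathcal{U},\,X}}^{-1}(C_n) \subset \sh_{_{\mathcal{U},\,X}}^{-1}(B) \subset \sh_{_{\mathcal{U},\,X}}^{-1}(O_n)$. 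Hence, setting $\Gamma_1 := \bigcup_{n\in\NN}\widehat{C_n}$ and $\Gamma_2 := \bigcap_{n\in\NN}\widehat{O_n}$ — both of which lie in $L(\widehat{\mathcal{B}})$ since $\widehat{C_n},\widehat{O_n}\in\widehat{\mathcal{B}}\subset L(\widehat{\mathcal{B}})$ and $L(\widehat{\mathcal{B}})$ is a $\sigma$-algebra — we get $\Gamma_1 \subset \sh_{_{\mathcal{U},\,X}}^{-1}(B) \subset \Gamma_2$.

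Next I would estimate the measures of $\Gamma_1$ and $\Gamma_2$. By definition of $\widehat\mu_{_\mathcal{U}}$ on UP-sets determined by Borel sequences, $\widehat\mu_{_\mathcal{U}}(\widehat{C_n}) = \ulim_m \mu(C_n) = \mu(C_n)$ (the sequence being constant), and similarly $\widehat\mu_{_\mathcal{U}}(\widehat{O_n}) = \mu(O_n)$. Monotonicity of the (countably additive) extension $\widehat\mu_{_\mathcal{U}}$ then yields $\mu(C_n) = \widehat\mu_{_\mathcal{U}}(\widehat{C_n}) \leq \widehat\mu_{_\mathcal{U}}(\Gamma_1) \leq \widehat\mu_{_\mathcal{U}}(\Gamma_2) \leq \widehat\mu_{_\mathcal{U}}(\widehat{O_n}) = \mu(O_n)$ for every $n$. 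Since $\mu(O_n)-\mu(C_n) < 1/n \to 0$ and $C_n \uparrow$, $O_n \downarrow$ with $\mu(C_n) \to \mu(B)$ and $\mu(O_n)\to \mu(B)$ by regularity, we conclude $\widehat\mu_{_\mathcal{U}}(\Gamma_1) = \widehat\mu_{_\mathcal{U}}(\Gamma_2) = \mu(B)$.

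Finally, Proposition~\ref{LBcomplete}(2) applies with $\Gamma := \sh_{_{\mathcal{U},\,X}}^{-1}(B)$ squeezed between $\Gamma_1$ and $\Gamma_2$, both in $L(\widehat{\mathcal{B}})$ with equal measure, so $\sh_{_{\mathcal{U},\,X}}^{-1}(B) \in L(\widehat{\mathcal{B}})$; and monotonicity of $\widehat\mu_{_\mathcal{U}}$ forces $\widehat\mu_{_\mathcal{U}}(\sh_{_{\mathcal{U},\,X}}^{-1}(B)) = \mu(B)$. The main point requiring care is that the extension $\widehat\mu_{_\mathcal{U}}$ in Proposition~\ref{LBcomplete} is genuinely countably additive (hence monotone and continuous along monotone sequences), which is exactly what \eqref{caratheodor_cond} and the Carathéodory-type construction behind Proposition~\ref{LBcomplete} guarantee; everything else is a routine squeezing argument.
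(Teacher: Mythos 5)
Your proof is correct and follows essentially the same route as the paper: squeeze $\sh_{_{\mathcal{U},\,X}}^{-1}(B)$ between UP-sets coming from the regularity sequences $(C_n)$ and $(O_n)$ via Lemma~\ref{le:openclosedsets}, and invoke the completeness of $L(\widehat{\mathcal{B}})$ from Proposition~\ref{LBcomplete}. In fact your choice $\Gamma_1=\bigcup_n \widehat{C_n}$ is the right one (the paper writes $\bigcap_n \widehat{C_n}$, which for an increasing sequence $(C_n)$ is evidently a typo for the union), so no issues.
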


\begin{proof}
For each $B \in \mathcal{B}$, consider sequences $(O_n)_{n \, \in \,\NN}$ and $(C_n)_{n \, \in \,\NN}$ as in \eqref{caractborelian}. Then
$$\sh_{_{\mathcal{U},\,X}}^{-1}(C_n) \subset \sh_{_{\mathcal{U},\,X}}^{-1}(B) \subset \sh_{_{\mathcal{U},\,X}}^{-1}(O_n) \quad \quad \forall \,n \,\in \,\mathbb{N}.$$
By Lemma~\ref{le:openclosedsets}, we have $\widehat C_n \subset \sh_{_{\mathcal{U},\,X}}^{-1}(C_n)$ and $\sh_{_{\mathcal{U},\,X}}^{-1}(O_n) \subset \widehat O_n$ for every $n \in \NN$. Thus,
$$\widehat C_n \subset \sh_{_{\mathcal{U},\,X}}^{-1}(B) \subset \widehat O_n \quad \quad \forall \,n \,\in \,\mathbb{N}.$$
Set
$$\widehat{C} := \bigcap_{n \,\in \,\NN} \,\widehat C_n \quad \quad \text{and} \quad \quad \widehat{O} := \bigcap_{n\, \in\, \NN}\, \widehat O_n.$$
Then,
$$\widehat{C} \subset \sh_{_{\mathcal{U},\,X}}^{-1}(B) \subset \widehat{O}.$$
Moreover, by Definition~\ref{def:medida Loeb} we have
$$\widehat \mu_{{_\mathcal{U}}}(\widehat C_n) = \mu(C_n) \quad \quad \text{and} \quad \quad  \widehat \mu_{{_\mathcal{U}}}(\widehat O_n) = \mu(O_n)$$
so, as $\mu(O_n)-\mu(C_n) < \frac 1n$,
$$\widehat \mu_{{_\mathcal{U}}}(\widehat{C})=\widehat \mu_{{_\mathcal{U}}}(\widehat{O}).$$
Hence, with Proposition~\ref{LBcomplete} we confirm that $\sh_{_{\mathcal{U},\,X}}^{-1}(B) \in L(\widehat{\mathcal{B}})$. Additionally, we deduce that
$$\widehat {\mu}_{{_\mathcal{U}}}(\sh_{_{\mathcal{U},\,X}}^{-1}(B)) = \widehat {\mu}_{{_\mathcal{U}}}(\widehat{C}) = \mu(B).$$
\end{proof}

We will now prove that any element of $L(\widehat{\mathcal{B}})$ differs from an element of $\widehat{\mathcal{B}}$ by a $\widehat \mu_{{_\mathcal{U}}}-$null set.

\begin{proposition}\label{LBapproximation}
For any set $\Gamma \in L(\widehat{\mathcal{B}})$ there is $\Lambda \in \widehat{\mathcal{B}}$ such that $\widehat \mu_{{_\mathcal{U}}}(\Gamma \,\Delta \,\Lambda)=0.$
\end{proposition}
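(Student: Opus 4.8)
The plan is to exploit the outer/inner approximation built into the definition of the Loeb-type extension $L(\widehat{\mathcal{B}})$ combined with Lemma~\ref{le:fip}. Recall from Proposition~\ref{LBcomplete} that $\Gamma \in L(\widehat{\mathcal{B}})$ precisely means there exist sets $\Gamma_1 \subset \Gamma \subset \Gamma_2$ with $\Gamma_1,\Gamma_2 \in \widehat{\mathcal{B}}$ and $\widehat\mu_{_\mathcal{U}}(\Gamma_1)=\widehat\mu_{_\mathcal{U}}(\Gamma_2)$; more precisely, the Carath\'eodory construction gives, for each $\varepsilon>0$, a UP-set (indeed an element of $\widehat{\mathcal{B}}$) $\Lambda_\varepsilon \in \widehat{\mathcal{B}}$ with $\Gamma \subset \Lambda_\varepsilon$ (or $\Lambda_\varepsilon \subset \Gamma$) and $\widehat\mu_{_\mathcal{U}}(\Lambda_\varepsilon \Delta \Gamma)<\varepsilon$ — this is the standard outer-measure estimate underlying the extension. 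So first I would fix, for each $k \in \NN$, a set $\Lambda_k \in \widehat{\mathcal{B}}$ with $\Gamma \subset \Lambda_k$ and $\widehat\mu_{_\mathcal{U}}(\Lambda_k \setminus \Gamma) < \tfrac1k$, and likewise a set $\Theta_k \in \widehat{\mathcal{B}}$ with $\Theta_k \subset \Gamma$ and $\widehat\mu_{_\mathcal{U}}(\Gamma \setminus \Theta_k) < \tfrac1k$.

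Next I would form $\widehat{\Lambda} := \bigcap_{k \in \NN} \Lambda_k$ and $\widehat{\Theta} := \bigcup_{k \in \NN} \Theta_k$. The key point, and the reason the whole argument works in this non-standard setting, is that these countable operations do \emph{not} leave $\widehat{\mathcal{B}}$: by Lemma~\ref{le:fip} a decreasing sequence of non-empty UP-sets has non-empty intersection, and any countable cover of a UP-set by UP-sets reduces to a finite one. Consequently, after replacing $\Lambda_k$ by $\Lambda_1 \cap \cdots \cap \Lambda_k$ (still in the algebra $\widehat{\mathcal{B}}$) to make the sequence decreasing, the intersection $\widehat{\Lambda}$ is, up to a $\widehat\mu_{_\mathcal{U}}$-null set, already equal to one of the finite intersections $\Lambda_1 \cap \cdots \cap \Lambda_{k_0}$; in particular $\widehat{\Lambda} \in \widehat{\mathcal{B}}$ modulo a null set, and similarly for $\widehat{\Theta}$ via the complementary (cover) formulation. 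This is precisely where I expect the main obstacle to lie: one must argue carefully that "finite subcover'' translates into "the monotone countable limit is realized at a finite stage up to measure zero,'' i.e. that $\widehat\mu_{_\mathcal{U}}(\widehat{\Lambda}) = \lim_k \widehat\mu_{_\mathcal{U}}(\Lambda_1\cap\cdots\cap\Lambda_k)$ is attained, so that $\widehat{\Lambda}$ differs from a genuine element of $\widehat{\mathcal{B}}$ by a null set — invoking the compatibility condition \eqref{caratheodor_cond} and the finite-subcover consequence of Lemma~\ref{le:fip} on the sets $\Lambda_1\cap\cdots\cap\Lambda_k \setminus \widehat\Lambda$.

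Finally I would assemble the conclusion. By construction $\widehat{\Theta} \subset \Gamma \subset \widehat{\Lambda}$, and $\widehat\mu_{_\mathcal{U}}(\widehat{\Lambda}\setminus\Gamma) \le \widehat\mu_{_\mathcal{U}}(\Lambda_k \setminus \Gamma) < \tfrac1k$ for all $k$, hence $\widehat\mu_{_\mathcal{U}}(\widehat{\Lambda}\setminus\Gamma)=0$; symmetrically $\widehat\mu_{_\mathcal{U}}(\Gamma\setminus\widehat{\Theta})=0$. Taking $\Lambda$ to be the element of $\widehat{\mathcal{B}}$ that agrees with $\widehat{\Lambda}$ (equivalently with $\widehat{\Theta}$) up to a null set — which exists by the previous paragraph — we obtain $\widehat\mu_{_\mathcal{U}}(\Gamma \,\Delta\, \Lambda) \le \widehat\mu_{_\mathcal{U}}(\widehat{\Lambda}\setminus\widehat{\Theta}) + (\text{null corrections}) = 0$, since $\widehat\mu_{_\mathcal{U}}(\widehat{\Lambda}) = \widehat\mu_{_\mathcal{U}}(\widehat{\Theta}) = \widehat\mu_{_\mathcal{U}}(\Gamma)$ forces $\widehat\mu_{_\mathcal{U}}(\widehat{\Lambda}\setminus\widehat{\Theta})=0$. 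This yields the desired $\Lambda \in \widehat{\mathcal{B}}$ with $\widehat\mu_{_\mathcal{U}}(\Gamma \,\Delta\, \Lambda)=0$.
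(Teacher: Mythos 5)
There is a genuine gap at exactly the step you flag as the main obstacle, and the resolution you propose for it is false. You claim that, after making the sequence decreasing, Lemma~\ref{le:fip} forces the countable intersection $\widehat{\Lambda}=\bigcap_{k}\Lambda_k$ to coincide, up to a $\widehat\mu_{_\mathcal{U}}$-null set, with some finite stage $\Lambda_1\cap\cdots\cap\Lambda_{k_0}$. Lemma~\ref{le:fip} is a purely set-theoretic compactness statement (an empty countable intersection forces an empty finite intersection); it has no measure-theoretic content. Concretely, take $X=[0,1]$ with Lebesgue measure and $\Lambda_k=\Bigl(\prod_{n\,\in\,\NN}\,[0,\tfrac12+\tfrac1k]\Bigr)_{\diagup\backsim}$. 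This is a decreasing sequence of UP-sets with $\widehat\mu_{_\mathcal{U}}(\Lambda_k)=\tfrac12+\tfrac1k$ for $k\geq 2$, so every finite stage has measure strictly greater than $\tfrac12$, whereas $\bigcap_k\Lambda_k=\sh_{_{\mathcal{U},\,X}}^{-1}([0,\tfrac12])$ has measure $\tfrac12$ by Lemma~\ref{sh-measurability}; no finite stage agrees with the intersection modulo a null set. The same phenomenon occurs in your setting: $\widehat\mu_{_\mathcal{U}}(\Lambda_k\setminus\Gamma)$ may be strictly positive for every $k$, and since $\widehat{\mathcal{B}}$ is only an algebra (not a $\sigma$-algebra), $\widehat{\Lambda}$ need not belong to $\widehat{\mathcal{B}}$ at all. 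Your final assembly is fine \emph{given} an element of $\widehat{\mathcal{B}}$ agreeing with $\widehat{\Lambda}$ up to a null set, but producing such an element is precisely the content of the proposition; the argument as written reduces the statement to itself.

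The missing ingredient is a saturation-type diagonal construction, which is how the paper proceeds. Writing $\Sigma_n=\Bigl(\prod_k A_{n,k}\Bigr)_{\diagup\backsim}\subset\Gamma\subset\Upsilon_n=\Bigl(\prod_k B_{n,k}\Bigr)_{\diagup\backsim}$, one checks that for each $n$ the set $K_n$ of coordinates $k$ at which $A_{n-1,k}\subset A_{n,k}\subset B_{n,k}\subset B_{n-1,k}$ and $\mu(B_{n,k})-\mu(A_{n,k})<\tfrac1n$ lies in $\mathcal{U}$, arranges the $K_n$ to be decreasing with $k\geq n$ for $k\in K_n$, and then defines a \emph{single} UP-set $\Lambda=\Bigl(\prod_k\Lambda_k\Bigr)_{\diagup\backsim}$ coordinate-by-coordinate by $\Lambda_k=B_{m,k}$ for $k$ in the block $J_m=K_{m-1}\setminus K_m$. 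One then verifies $\Sigma_n\subset\Lambda\subset\Upsilon_n$ simultaneously for all $n$, which yields $\widehat\mu_{_\mathcal{U}}(\Gamma\,\Delta\,\Lambda)=0$. This interpolation of a single algebra element between the increasing inner and decreasing outer approximations is the lemma you must prove in place of the finite-stage claim; it cannot be replaced by the finite subcover property alone.
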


\begin{proof}
According to Carathéodory Extension Theorem, we can find an increasing sequence of sets $\Sigma_n \in \widehat{\mathcal{B}}$ and a decreasing sequence of sets $\Upsilon_n \in  \widehat{\mathcal{B}}$ such that, for every $n \in \NN$,
\begin{equation}\label{defCnDn1}
\Sigma_n \subset \Gamma \subset \Upsilon_n
\end{equation}
and
\begin{equation}\label{defCnDn2}
\widehat {\mu}_{{_\mathcal{U}}}(\Sigma_n)-\widehat {\mu}_{{_\mathcal{U}}}(\Upsilon_n)  < \frac 1n.
\end{equation}
By definition, for each $n \in \mathbb{N}$ there are sequences $(A_{n,k})_{k \,\in\,\mathbb{N}}$ and $(B_{n,k})_{k \,\in\,\mathbb{N}}$ of Borel subsets of $X$ such that
$$\Sigma_n=\Big(\prod_{k \, \in \, \mathbb{N}} \,A_{n,k}\Big)_{\diagup \backsim} \quad \quad \text{and} \quad \quad \Upsilon_n=\Big(\prod_{k \, \in \, \mathbb{N}} \,B_{n,k}\Big)_{\diagup \backsim}.$$
Given $n \in \NN$, denote by $K_n$ the set of $k \in \mathbb{N}$ such that
\begin{equation}
A_{n-1,k} \subset A_{n,k} \subset B_{n,k} \subset B_{n-1,k} \quad \quad \text{and} \quad \quad \mu(B_{n,k})-\mu(A_{n,k}) < \frac 1n.
\end{equation}
We observe that $K_n \in \mathcal{U}$ for every $n \in \NN$. Since $\widehat{\mathcal{B}}$ satisfies the finite intersection property stated in Lemma~\ref{le:fip}, we may assume that the sequence $(K_n)_{n \,\in\,\NN}$ is decreasing. Moreover, as all co-finite sets belong to $\mathcal{U}$, we may also suppose that, if $k \in K_n$, then $k \geq n$. By setting $K_0=\NN$ and, for $n \in \mathbb{N}$,
$$J_n = K_{n-1} \setminus K_n$$
we obtain a sequence $(J_n)_{n \, \in \, \mathbb{N}}$ of disjoint subsets of positive integers with the property
\begin{equation}\label{defInbyJn}
K_n = \bigcup_{m\,>\,n}\,J_m.
\end{equation}
Define the UP-set
$$\Lambda := \Big(\prod_{k \, \in \, \mathbb{N}} \,\Lambda_{k}\Big)_{\diagup \backsim}$$
where $\Lambda_k := B_{m,k}$ for $k \in J_m$. We are left to prove that the set $\Lambda$ satisfies
\begin{equation}\label{eq:inclusion}
\Sigma_n  \subset \Lambda \subset \Upsilon_n \quad \quad \forall  \,\,n \in \NN
\end{equation}
because, as $\Sigma_n \subset \Gamma$, if $\Lambda \subset \Upsilon_n$ for every $n \in \mathbb{N}$ then the assertion $\widehat \mu_{{_\mathcal{U}}}(\Gamma \,\Delta\, \Lambda)=0$ is a straightforward consequence of \eqref{defCnDn2}.

To prove \eqref{eq:inclusion}, it is enough to show that, for any $n \in \NN$, we have
$$M_n = \left\{k \in \NN \colon  A_{n,k} \subset \Lambda_k \subset B_{n,k}\right\} \in \mathcal{U}.$$
Let $n \in \NN$ and $m > n$ any arbitrary integer. For $k \in J_m$ we have, by definition, $\Lambda_k = B_{m,k}$. Besides, as $B_{m,k} \subset B_{m-1,k}\subset  \cdots \subset B_{n,k}$, for $k \in J_m$ we have
$$\Lambda_k \subset B_{n,k}.$$
Therefore, $\Lambda_k \subset B_{n,k}$ for any $k \in \bigcup_{m\,>\,n} \, J_m$. So, by \eqref{defInbyJn}, we deduce that $\Lambda_k \subset B_{n,k}$ for every $k \in K_n$. As $K_n \in \mathcal{U}$, we finally obtain $\Lambda \subset \Upsilon_n$. In an analogous way one shows that $\Sigma_n \subset \Lambda$.
\end{proof}

\medskip

If we endow the space $\widehat X$ the quotient topology of the product topology in $\prod_{n \, \in \,\NN}\,X$, it will have the indiscrete topology because any nonempty open set in $\prod_{n \, \in \,\NN}\,X$ depends on only finitely many coordinates. Although with this choice $\widehat X$ would be compact, we are not interested in such a non-Hausdorff space. Otherwise, if we choose to give $\widehat X$ the so called ultraproduct topology (cf. \cite{Bankston, Bankston2}), which is generated by the sets $\Big(\prod_{n \, \in \,\mathbb{N}}\,O_n\Big)_{\diagup \backsim}$ where each $O_n$ is open in $X$, and for which the subsets $\Big(\prod_{n \, \in \,\mathbb{N}}\,F_n\Big)_{\diagup \backsim}$, where $F_n$ is closed in $X$ for every $n$, are closed, then the Borel sets are included in $L(\widehat{\mathcal{B}})$. Notice that the $\sigma-$algebra we are considering in $\widehat X$ contains the Borel sets of $\widehat X$. Moreover, by Proposition~\ref{LBapproximation}, given $\Gamma \in L(\widehat{\mathcal{B}})$ there is $\Lambda=\Big(\prod_{n \, \in \,\mathbb{N}}\,A_n\Big)_{\diagup \backsim} \in \widehat{\mathcal{B}}$ such that $\widehat \mu_{{_\mathcal{U}}}(\Gamma \,\Delta \,\Lambda)=0.$ As $\mu$ is regular, for each $A_n$ there exists a closed subset $K_n \subset A_n$ such that $\mu(A_n) - \mu(K_n) < \frac{1}{n}$. Take $\Sigma = \Big(\prod_{n \, \in \,\mathbb{N}}\,K_n\Big)_{\diagup \backsim}$; as $\ulim_n \,\mu(K_n) = \ulim_n \,\mu(A_n)$, we get
\begin{equation}\label{eq:approx-compact}
\widehat \mu_{{_\mathcal{U}}}(\Sigma \,\Delta \,\Gamma) = 0.
\end{equation}
So, although with this topology $\widehat X$ may not be locally compact (cf. \cite{Bankston}), the probability measure $\widehat \mu$ is inner regular.

\begin{remark}
\emph{We observe that the inclusion map $\iota$ may be non-measurable.\footnote{https://terrytao.wordpress.com/2008/10/14/non-measurable-sets-via-non-standard-analysis/}}
\end{remark}

\section{Proof of Theorem~\ref{teo:main}}\label{se:prova-teorema-A}

As mentioned previously, for a sequence of measurable maps $f_n: X \to \RR$ we do not necessarily have that $f=\ulim_n f_n$ is a measurable function. However, we can extend $f$ to a map $\widehat f: \widehat X \to \RR$ using the shadow map $\sh_{_{\mathcal{U},\,\RR}}: \widehat{\RR} \to \RR$ by defining
\begin{equation}\label{def:mathfrak f}
\widehat f([x_n]) \ = \ \ulim_n \,[f_n(x_n)] \ = \ \sh_{_{\mathcal{U},\,\RR}}([f_n(x_n)])
\end{equation}
(thus, if we identify $x$ with its equivalent class $[x]$, one has $\widehat f([x])= \ \ulim_n \, f_n(x)$). One advantage of performing this extension is the following.

\begin{lemma}\label{measurability_ulim_fn}
%For any sequence of $\mathcal{B}-$measurable functions $f_n: X \to \RR$,
The map $\widehat f$ is $L(\widehat{\mathcal{B}})-$measurable.
\end{lemma}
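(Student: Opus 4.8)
The plan is to show that the preimage under $\widehat f$ of every half-line $(a,+\infty)$, with $a \in \RR$, lies in $L(\widehat{\mathcal{B}})$. Fix $a \in \RR$. The natural candidate for $\widehat f^{-1}((a,+\infty))$, up to boundary effects, is built from the sets on which the $f_n$ exceed $a$: for each $n$ put $A_n^{(a)} := \{x \in X : f_n(x) > a\} \in \mathcal{B}$, and consider the UP-set $\Lambda_a := \big(\prod_{n \in \NN} A_n^{(a)}\big)_{\diagup \backsim} \in \widehat{\mathcal{B}}$. The key observation is the equivalence, valid for a representative $[x_n]$ and a real number $a$: if $[x_n] \in \Lambda_a$ then $\{n : f_n(x_n) > a\} \in \mathcal{U}$, hence $\ulim_n f_n(x_n) \geq a$; conversely if $\ulim_n f_n(x_n) > a$ then, choosing any $a < a' < \ulim_n f_n(x_n)$, the set $\{n : f_n(x_n) > a'\}$ is in $\mathcal{U}$, so $[x_n] \in \Lambda_{a'}$. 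This gives the sandwich
$$
\bigcup_{a' > a} \Lambda_{a'} \ \subseteq \ \widehat f^{-1}\big((a,+\infty)\big) \ \subseteq \ \Lambda_a ,
$$
and taking the union over $a'$ ranging in a countable set $\{a_k\}$ with $a_k \downarrow a$ (e.g. $a_k = a + 1/k$) the left-hand side becomes $\bigcup_k \Lambda_{a_k}$, a countable union of members of $\widehat{\mathcal{B}} \subseteq L(\widehat{\mathcal{B}})$, hence itself in $L(\widehat{\mathcal{B}})$; and $\Lambda_a \in \widehat{\mathcal{B}} \subseteq L(\widehat{\mathcal{B}})$.

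It then remains to control the "gap" set $G_a := \widehat f^{-1}(\{a\}) \cap (\Lambda_a \setminus \bigcup_k \Lambda_{a_k})$, or rather to show that $\widehat f^{-1}((a,+\infty))$ differs from $\Lambda_a$ by something in $L(\widehat{\mathcal{B}})$. The cleanest route is to invoke completeness of $L(\widehat{\mathcal{B}})$ (Proposition~\ref{LBcomplete}(2)): I would argue that $\widehat f^{-1}([a,+\infty)) = \bigcap_k \Lambda_{a_k'}$ with $a_k' = a - 1/k$ (an analogous sandwich with non-strict inequality on the left), so both $\widehat f^{-1}((a,+\infty)) \subseteq \widehat f^{-1}([a,+\infty))$ are pinned between explicit $\widehat{\mathcal{B}}$-measurable sets, and the difference $\widehat f^{-1}(\{a\})$ has measure that can be squeezed — no, more directly: since $\bigcup_k \Lambda_{a_k} \subseteq \widehat f^{-1}((a,+\infty)) \subseteq \Lambda_a \subseteq \widehat f^{-1}([a,+\infty)) = \bigcap_j \Lambda_{a-1/j}$, and both outer ends agree with the inner end after one more limit, I need the measures to match. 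Here I would use $\widehat\mu_{_\mathcal{U}}(\Lambda_a \setminus \bigcup_k \Lambda_{a_k}) = \ulim_n \mu(A_n^{(a)}) - \ulim_n \mu(A_n^{(a_k)})$ controlled as $k \to \infty$ by the integrability bound $|f_n| \leq g$ (or, in the Theorem~\ref{teo:second-main} setting, by $\UU$-integrability), which forces $\ulim_n \mu(\{x : a < f_n(x) \le a+1/k\}) \to 0$; this is exactly where the dominating hypothesis enters, and it is the step I expect to be the main obstacle, since without domination the ultralimit of these "thin shell" measures need not vanish.

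Once that squeeze is in place, completeness of $L(\widehat{\mathcal{B}})$ gives $\widehat f^{-1}((a,+\infty)) \in L(\widehat{\mathcal{B}})$ for every $a$, and since such half-lines generate the Borel $\sigma$-algebra of $\RR$, the map $\widehat f$ is $L(\widehat{\mathcal{B}})$-measurable. I should also note the harmless special cases $\widehat f \equiv +\infty$ or $-\infty$ on pieces (ruled out under $|f_n| \le g$ with $g$ integrable, but worth a word in the general $\UU$-integrable version), and that the argument is symmetric for $(-\infty,a)$ so one could equally work from below. The only genuinely non-formal ingredient is the vanishing of the thin-shell ultralimit measures; everything else is bookkeeping with UP-sets and the completeness clause of Proposition~\ref{LBcomplete}.
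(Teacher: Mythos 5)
Your route is genuinely different from the paper's. The paper factors $\widehat f$ as $\sh_{_{\mathcal{U},\,\RR}}\circ F$, where $F([x_n])=[f_n(x_n)]$ satisfies $F^{-1}(\widehat{\mathcal{B}_\RR})\subset\widehat{\mathcal{B}}$ because each $f_n$ is measurable, and then invokes the measurability of the shadow map (Lemma~\ref{sh-measurability}), which in turn rests on regularity of the measure and the completeness clause of Proposition~\ref{LBcomplete}. Your direct computation of $\widehat f^{-1}\big((a,+\infty)\big)$ bypasses all of that: it uses only that $L(\widehat{\mathcal{B}})$ is a $\sigma$-algebra containing $\widehat{\mathcal{B}}$, and it makes visible that measurability of $\widehat f$ has nothing to do with integrability or domination. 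That is a real gain in transparency.

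The problem is that you have misread your own sandwich, and the step you single out as ``the main obstacle'' is both unnecessary and false. Your two observations already prove the exact identity $\widehat f^{-1}\big((a,+\infty)\big)=\bigcup_{a'>a}\Lambda_{a'}=\bigcup_k\Lambda_{a_k}$: membership in some $\Lambda_{a_k}$ forces $\ulim_n f_n(x_n)\ge a_k>a$, and your ``conversely'' clause is precisely the reverse inclusion $\widehat f^{-1}\big((a,+\infty)\big)\subseteq\bigcup_{a'>a}\Lambda_{a'}$. So the preimage of every open half-line is a countable union of elements of $\widehat{\mathcal{B}}$ and you are done; the set $\Lambda_a\setminus\bigcup_k\Lambda_{a_k}$ is contained in $\widehat f^{-1}(\{a\})$, which is disjoint from the set you are computing, so there is no gap to control. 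Had the proof actually depended on the thin-shell estimate, it would collapse: take $f_n\equiv a+1/n$, dominated by the constant $|a|+1$; then $\mu\big(\{x: a<f_n(x)\le a+1/k\}\big)=1$ for all $n\ge k$, so its $\UU$-limit equals $1$ for every $k$, and indeed $\Lambda_a=\widehat X$ while $\widehat f^{-1}\big((a,+\infty)\big)=\bigcup_k\Lambda_{a_k}=\emptyset$. The two ends of your sandwich need not have equal measure; only the left end is the set you want. Delete the gap/thin-shell discussion, record the equality, keep your remark on the extended-real values of $\widehat f$, and the argument is complete.
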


\begin{proof}
Denote by $\mathcal{B}_\RR$ the Borel sets of $\RR$. It is straightforward to verify that the map $F:\widehat X \to \widehat{\RR}$ defined by
$$F([x_n])=[f_n(x_n)]$$
is a measurable function from $(\widehat X,\,L(\widehat{\mathcal{B}}))$ to $(\widehat{\RR},\, L(\widehat{\mathcal{B}}_\RR))$. Indeed, we have $F^{-1}(\widehat{\mathcal{B}_\RR})\subset \widehat{\mathcal{B}}$ since, for each $n \in \mathbb{N}$, the map $f_n$ is measurable and so $f_n^{-1}(\mathcal{B}_\RR) \subset \mathcal{B}$. To end the proof we are left to take into account that $\sh_{_{\mathcal{U},\,\RR}}$ is measurable (cf. Lemma~\ref{sh-measurability}) and that $\widehat f= \sh_{_{\mathcal{U},\,\RR}}\,\circ\, F$.
\end{proof}

Given a $\mu-$integrable map $g: X \to \RR$ and a sequence $(f_n)_{n \, \in \, \mathbb{N}}$ of $\mu-$integrable real-valued functions satisfying $|f_n| \leq g$ for every $n \in \NN$, then $(f_n)_{n \, \in \, \mathbb{N}}$ is $\UU-$integrable. Indeed, conditions (2) and (3) of Subsection~\ref{sse:U-integrable} are immediate consequences of the domination $|f_n| \leq g$ by a $\mu-$integrable map $g$.

\begin{proposition}\label{prop:auxiliary}
If $(f_n)_{n \, \in \, \mathbb{N}}$ is a sequence of real-valued bounded $\UU-$integrable functions, then $\widehat f$ is $\widehat {\mu}_{{_\mathcal{U}}}-$integrable and
$$\int_{\widehat X} \,\widehat f \,\,d\,\widehat {\mu}_{{_\mathcal{U}}} \,\,=\, \,\mathcal{U}\text{-}\lim_n \, \int_X \,f_n \,\,d\mu.$$
\end{proposition}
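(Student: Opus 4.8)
The plan is to reduce the statement to the case of simple functions, where everything can be computed explicitly, and then pass to the limit using the $\UU$-integrability hypothesis (condition (3) of Subsection~\ref{sse:U-integrable}) to control the tails. First I would record the measurability of $\widehat f$, which is already available from Lemma~\ref{measurability_ulim_fn}, so the only real content is integrability together with the identity for the integral. Since the sequence $(f_n)_{n\in\NN}$ is assumed bounded, say $|f_n|\le M$ for all $n$, the function $\widehat f$ is bounded by $M$ as well (the shadow map does not increase absolute value of a bounded quantity), hence automatically $\widehat\mu_{_\mathcal U}$-integrable on the probability space $(\widehat X, L(\widehat{\mathcal B}),\widehat\mu_{_\mathcal U})$; so the task is precisely to establish the equality of integrals.

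For the equality, I would first treat the model case where each $f_n$ is a simple function taking finitely many values. Here one writes $f_n=\sum_{i} c_{n,i}\,\chi_{A_{n,i}}$ with $\{A_{n,i}\}_i$ a finite measurable partition of $X$; one may even arrange that all $f_n$ use a common finite set of values $\{c_1,\dots,c_r\}$ after a preliminary reduction by truncating and discretizing the range (the errors introduced are uniformly small and vanish in the $\UU$-limit). Then $F([x_n])=[f_n(x_n)]$ takes the value $c_i$ exactly on the UP-set $\Lambda_i=\big(\prod_n A_{n,i}\big)_{\diagup\backsim}\in\widehat{\mathcal B}$, and by definition \eqref{def:medida Loeb} we have $\widehat\mu_{_\mathcal U}(\Lambda_i)=\ulim_n\,\mu(A_{n,i})$. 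On the other hand $\widehat f=\sh_{_{\mathcal U,\RR}}\circ F$ also takes the value $c_i$ on $\Lambda_i$ (the shadow of a constant sequence is that constant), so
$$\int_{\widehat X}\widehat f\,d\widehat\mu_{_\mathcal U}=\sum_{i}c_i\,\widehat\mu_{_\mathcal U}(\Lambda_i)=\sum_i c_i\,\ulim_n\,\mu(A_{n,i})=\ulim_n\,\sum_i c_i\,\mu(A_{n,i})=\ulim_n\,\int_X f_n\,d\mu,$$
where the third equality uses that the $\UU$-limit of a finite sum is the sum of the $\UU$-limits. This settles the simple-function case.

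For the general bounded $\UU$-integrable sequence, I would approximate: for each $n$ pick a simple function $s_n$ with $|s_n|\le M$ and $\|f_n-s_n\|_{L^1(\mu)}$ as small as we like, say $\le 1/n$; better, since we want the corresponding ultrapower objects to be close, I would fix $\varepsilon>0$ and choose simple $s_n$ with $|f_n-s_n|\le\varepsilon$ pointwise (possible by cutting the interval $[-M,M]$ into pieces of length $\varepsilon$ and letting $s_n$ be constant on the preimages). Then on the standard side $\big|\ulim_n\int f_n\,d\mu-\ulim_n\int s_n\,d\mu\big|\le\varepsilon$, and on the ultrapower side $|\widehat f-\widehat s|\le\varepsilon$ pointwise where $\widehat s$ is built from $(s_n)$, so the integrals over $\widehat X$ differ by at most $\varepsilon$; combining with the simple-function identity already proved for $(s_n)$, the two sides of the claimed equation differ by at most $2\varepsilon$. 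Letting $\varepsilon\to 0$ finishes the argument. I expect the main obstacle to be purely bookkeeping: making sure that the simple approximations $s_n$ are genuinely measurable and that the associated UP-sets $\Lambda_i$ lie in $\widehat{\mathcal B}$ (so that \eqref{def:medida Loeb} applies and the value $\widehat\mu_{_\mathcal U}(\Lambda_i)$ is well defined independently of the defining sequence), and verifying that $\widehat f=\sh_{_{\mathcal U,\RR}}\circ F$ really is constant on each $\Lambda_i$ — this last point is where the identification $\widehat f([x])=\ulim_n f_n(x)$ from \eqref{def:mathfrak f} together with the behaviour of the shadow map on constant sequences is used. Note that boundedness is essential here precisely to make the pointwise $\varepsilon$-approximation uniform and to guarantee integrability of $\widehat f$; the unbounded case will be handled separately in the proof of Theorem~\ref{teo:second-main} via condition (3).
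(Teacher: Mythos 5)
Your argument is correct, and for the crucial approximation step it runs in the opposite direction from the paper's. The paper first approximates the ultralimit $\widehat f$ on the ultrapower side: it invokes Lemma~\ref{simple_func_for_hat_mu_integral} to produce a simple function $\mathfrak{s}^\varepsilon$ supported on UP-sets with $|\widehat f-\mathfrak{s}^\varepsilon|<\varepsilon/2$ almost everywhere (this in turn rests on Proposition~\ref{LBapproximation}, which replaces $L(\widehat{\mathcal B})$-sets by UP-sets up to null sets), then pulls $\mathfrak{s}^\varepsilon$ back to an induced sequence $(s^\varepsilon_n)$ of simple functions on $X$ and proves $\ulim_n\int_X|f_n-s^\varepsilon_n|\,d\mu<\varepsilon/2$ by a contradiction argument on the UP-set of points where the approximation fails (Lemma~\ref{f_n_s_n_relat}). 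You instead build the approximation on the standard side, discretizing the common range $[-M,M]$ into intervals of length $\varepsilon$ so that $|f_n-s_n|\le\varepsilon$ pointwise and uniformly in $n$; the corresponding UP-sets $\Lambda_i$ then partition $\widehat X$ (exactly one of the finitely many sets $\{n:x_n\in A_{n,i}\}$ lies in $\mathcal U$), and the pointwise bound passes directly to $|\widehat f-\widehat s|\le\varepsilon$ on all of $\widehat X$ by monotonicity of ultralimits. Your route is more elementary and self-contained for this particular $\widehat f$ — it bypasses Lemma~\ref{simple_func_for_hat_mu_integral} and Proposition~\ref{LBapproximation} entirely, precisely because the uniform bound $M$ makes the range discretization uniform in $n$ — whereas the paper's lemma is stated for an arbitrary bounded $L(\widehat{\mathcal B})$-measurable map on $\widehat X$ and so is reusable beyond maps arising as ultralimits. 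Your reading of ``bounded'' as a uniform bound is the right one (it is how the proposition is later applied, to the truncations $f_n\wedge M$), and you correctly observe that under that hypothesis conditions (2) and (3) of $\UU$-integrability are automatic, so your proof does not need to invoke them; the first approximation scheme you float ($\|f_n-s_n\|_{L^1}\le 1/n$) would not immediately control the ultrapower side, but you discard it in favour of the pointwise one, which does.
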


\begin{proof}
As usual, we will verify the statement for sequences of characteristic functions of measurable subsets of $X$, then proceed to bounded sequences of simple functions and finally bounded sequences of measurable  functions.

\subsubsection*{\textbf{\emph{Sequences of measurable characteristic maps}}} We start considering a sequence of maps $(f_n)_{n \, \in \, \NN} = (\chi_{_{A_n}})_{n \, \in \, \NN}$, where $\chi_{_{A_n}}$ stands for the characteristic function of a measurable subset $A_n$ of $X$. As $\widehat f$ is well defined (cf. Subsection~\ref{sse:ultralimits}), measurable and bounded, it is $\widehat {\mu}_{{_\mathcal{U}}}-$integrable. Our aim is to prove that
$$\int_{\widehat X}\,\ulim_n \,[\chi_{_{A_n}}(x_n)] \,\,d\,\widehat {\mu}_{{_\mathcal{U}}}([x_n]) \ = \  \ulim_n\,\int_{X}\,\chi_{_{A_n}}(x)\,\,d\mu(x).$$
Notice that, by definition, if $\Lambda = \Big(\prod_{n\, \in\, \mathbb{N}} A_n\Big)_{\diagup \backsim}$, then
\begin{equation}\label{ulimchi_n}
\ulim_n \,[\chi_{_{A_n}}(x_n)] \ = \ \chi_{_\Lambda}([x_n]) \quad \quad  \forall \, [\left(x_n\right)_{n \, \in \, \NN}] \in \widehat X.
\end{equation}
Consequently, by \eqref{def:medida Loeb},
\begin{eqnarray}\label{eq:caracteristic}
\int_{\widehat X}\,\ulim_n \,[\chi_{_{A_n}}(x_n)] \,\,d\,\widehat {\mu}_{{_\mathcal{U}}}([x_n]) & = & \int_{\widehat X}\,\chi_{_\Lambda}([x_n])\,\,d\,\widehat {\mu}_{{_\mathcal{U}}}([x_n]) = \widehat {\mu}_{{_\mathcal{U}}}(\Lambda) \nonumber \\
&=& \ulim_n\,\mu(A_n) \nonumber \\
&=& \ulim_n\,\int_{X}\,\chi_{_{A_n}}(x)\,\,d\mu(x).
\end{eqnarray}

\subsubsection*{\textbf{\emph{Bounded sequences of simple measurable maps}}}
We proceed considering a bounded $\UU-$integrable sequence of simple functions $f_n: X \to \RR$ defined by
$$f_n=\sum_{k=1}^{p_n} a_{n,k} \,\chi_{_{A_{n,k}}}$$
where $a_{n,k} \in \RR$, $A_{n,k} \in \mathcal{B}$ and, by assumption (2) of Subsection~\ref{sse:U-integrable},
$$\ulim_n\,\Big(\sum_{k=1}^{p_n} a_{n,k} \,\int_{X}\,\chi_{_{A_{n,k}}}\,\,d\mu\Big) < + \infty.$$
The corresponding ultralimit $\widehat f:\widehat X \to \RR$ is measurable (cf. Lemma~\ref{measurability_ulim_fn})
and bounded, hence $\widehat {\mu}_{_\mathcal{U}}-$integrable. Using \eqref{eq:caracteristic} and the linearity of both the $\ulim$ and the integral operator, we obtain
\begin{equation} \label{ulim_sim_func_2}
\int_{\widehat X} \,\ulim_n\,\Big(\sum_{k=1}^{p_n} a_{n,k}\,[\chi_{_{A_{n,k}}}]\Big)\,\,d\,\widehat {\mu}_{{_\mathcal{U}}} \ = \ \ulim_n\,\Big(\sum_{k=1}^{p_n} a_{n,k} \,\int_{X}\,\chi_{_{A_{n,k}}}\,\,d\mu\Big).
\end{equation}

\subsubsection*{\emph{\textbf{Bounded sequences of measurable maps}}}\label{sse:bounded}
Take now a sequence of bounded measurable and $\UU-$integrable functions $f_n: X\to \RR$, and consider the corresponding ultralimit $\widehat f:\widehat X \to \RR$, which is $\widehat {\mu}_{_\mathcal{U}}-$integrable because it is measurable  %(cf. Lemma~\ref{measurability_ulim_fn})
and bounded.

\begin{lemma}\label{simple_func_for_hat_mu_integral}
Given a bounded measurable map $\mathfrak{g}: \widehat X \to \RR$ and $\varepsilon>0$, there exists a simple function $\mathfrak{s}^\varepsilon:\widehat X \to \RR$ supported on UP-sets such that
$$|\mathfrak{g}(z)-\mathfrak{s}^\varepsilon(z)|< \varepsilon \quad \quad \text{at} \quad \widehat {\mu}_{_\mathcal{U}}-\text{almost every}\quad z \,\in\,  \widehat X.$$
\end{lemma}

\begin{proof}
Fix $\mathfrak{g}$ and $\varepsilon >0$. There is a simple map $\mathfrak{r}^\varepsilon = \sum_{k=1}^p \, a_k\,\chi_{_{\Gamma_k}}$, where $a_k \in \RR$ and $\Gamma_k \in L(\widehat{\mathcal{B}})$, such that
$$|\mathfrak{g}(z)-\mathfrak{r}^\varepsilon(z)|< \varepsilon \quad \quad \text{at}\quad \widehat {\mu}_{_\mathcal{U}}-\text{almost every}\quad z \,\in\,  \widehat X.$$
By Proposition~\ref{LBapproximation}, for each $k \in \{1, 2, \cdots, p\}$, we can find $\Upsilon_k \in \widehat{\mathcal{B}}$ such that $\widehat \mu_{{_\mathcal{U}}}(\Gamma_k \,\Delta \,\Upsilon_k)=0$. Rewriting, if needed, the sum that defines $\mathfrak{r}^\varepsilon$, we may assume that $\widehat X$ is the disjoint union of the sets $\Gamma_k$, so $\widehat \mu_{{_\mathcal{U}}}\left(\widehat X \setminus \bigcup_{i=1}^p \,\Gamma_k\right)=0$, and that $\widehat \mu_{{_\mathcal{U}}}\left(\Gamma_i \cap \Gamma_j\right)=0$ for $i\not=j$. Define
$$\Gamma_{p + 1} =X \quad \quad \text{and} \quad \quad \Omega_1 \ = \ \Gamma_1$$
and, recursively, for $1 \leq k \leq p$, set
$$\Omega_{k+1}=\Gamma_{k+1} \setminus \bigcup_{i=1}^k \, \Omega_k.$$
Then, $\widehat \mu_{{_\mathcal{U}}}\left(\Omega_{p+1}\right)=0$ and $\mathfrak{s}^\varepsilon = \sum_{k=1}^p \,a_k\,\chi_{_{\Omega_k}}$ is a simple function supported on UP-sets such that $\mathfrak{r}^\varepsilon(z)=\mathfrak{s}^\varepsilon(z)$ for $\widehat {\mu}_{_\mathcal{U}}-$almost every $z \in \widehat X$ and $\int_{\widehat X} \,\mathfrak{r}^\varepsilon\; d\,\widehat {\mu}_{_\mathcal{U}} \ = \ \int_{\widehat X} \, \mathfrak{s}^\varepsilon \, d\,\widehat {\mu}_{_\mathcal{U}}$.
\end{proof}

For a simple function $\mathfrak{s}:\widehat X \to \RR$ given by $\mathfrak{s}=\sum_{k=1}^p \,a_k\,\chi_{_{\Lambda_k}}$, where $a_k \in \RR$ for all $k$, $\Lambda_k = \Big(\prod_{n \, \in \,\mathbb{N}}\,A_{k,n}\Big)_{\diagup \backsim}$ and $A_{k,n} \in \mathcal{B}$ for every $n$, we can consider the induced sequence of simple functions $(s_n: X \to \RR)_{n \in \NN}$ by setting
\begin{equation}\label{def:sn}
s_n=\sum_{k=1}^p \,a_k\,\chi_{_{A_{k,n}}}.
\end{equation}
Then, from \ref{ulimchi_n} we obtain, using the linearity properties of the $\ulim$,
\begin{equation}\label{ulim_sim_func_1}
\mathfrak{s} \ = \ \ulim_n \,s_n
\end{equation}
and, applying \eqref{eq:caracteristic}, we get
\begin{equation} \label{ulim_sim_func_2}
\int_{\widehat X} \,\mathfrak{s} \,\,d\,\widehat {\mu}_{_\mathcal{U}} \ = \  \ulim_n\,\int_{X}\,s_n\,d\mu.
\end{equation}

Take $\varepsilon >0$ arbitrary. By Lemma~\ref{simple_func_for_hat_mu_integral}, we may find a simple function $\mathfrak{s}^\varepsilon =\sum_{k=1}^p \,a_k\,\chi_{_{\Lambda_k}}$, where $a_k \in \RR$ and $\Lambda_k=\Big(\prod_{\ell \, \in \,\mathbb{N}}\,A_{k,\ell}\Big)_{\diagup \backsim}$ is a UP-set, such that
\begin{equation}\label{eq:aprox}
\left|\widehat f([x_n])- \mathfrak{s}^\varepsilon([x_n])\right| < \frac{\epsilon}{2} \quad \quad \text{at} \quad \widehat {\mu}_{_\mathcal{U}}-\text{almost every}\quad [x_n] \,\in\,  \widehat X
\end{equation}
and
\begin{equation}\label{def_sepsilon}
\left|\int_{\widehat X}\,\widehat f\:d\,\widehat {\mu}_{_\mathcal{U}} -\int_{\widehat X}\,\mathfrak{s}^\varepsilon\:d\,\widehat {\mu}_{_\mathcal{U}} \right| < \frac{\varepsilon}{2}.
\end{equation}

Let $\Big(s^\varepsilon_n\Big)_{n \, \in \, \mathbb{N}}$ be the sequence of simple functions induced by $\mathfrak{s}^\varepsilon$ as explained in \eqref{def:sn}, obeying to \ref{ulim_sim_func_1} and \ref{ulim_sim_func_2}.

\begin{lemma}\label{f_n_s_n_relat}
$\quad \ulim_n \,\int_X\,\left|f_n - s^\varepsilon_n\right|\:d\mu  \ < \ \frac{\epsilon}{2}$.
\end{lemma}

\begin{proof}
Suppose this is not true. Then the set
$$E_n\ = \ \left\{x \in X \colon \left|f_n(x)- s^\varepsilon_n(x)\right| \geq \frac{\varepsilon}{2} \right\}$$
satisfies
$$\ulim_n\,\mu(E_n)>0.$$
Therefore, $\widehat {\mu}_{_\mathcal{U}}(E)>0$ where $E$ is the UP-set $\Big(\prod_{n \, \in \, \mathbb{N}}\,E_n]\Big)_{\diagup \backsim}$. But, from the definiton of $E_n$, we deduce that, for every $[x_n] \in E$,
$$\left|\widehat f([x_n])- \mathfrak{s}^\varepsilon([x_n])\right| \geq \frac{\varepsilon}{2}$$
which contradicts \eqref{eq:aprox}.
\end{proof}

As an immediate consequence of Lemma~\ref{f_n_s_n_relat}, we have
\begin{equation}\label{approxfn_to_sn}
\ulim_n\,\left|\int_X f_n\:d\mu - \int_X \mathfrak{s}^\varepsilon_n\:d\mu\right| = \ulim_n \,\left|\int_X (f_n - \mathfrak{s}^\varepsilon_n)\:d\mu\right| \leq \ulim_n \,\int_X \left|f_n - \mathfrak{s}^\varepsilon_n\right|\:d\mu < \frac{\varepsilon}{2} \bigskip
\end{equation}
On the other hand, by the continuity of the absolute value map,
\begin{eqnarray*}
\ulim_n\,\left|\int_X f_n\:d\mu-\int_X \mathfrak{s}^\varepsilon_n\:d\mu\right| & = & \left|\ulim_n\,\int_X f_n\:d\mu-\ulim_n\,\int_X \mathfrak{s}^\varepsilon_n\:d\mu\right| \nonumber \\
&=& \left|\ulim_n\,\int_X f_n\:d\mu-\int_{\widehat X} \mathfrak{s}^\varepsilon\:d\,\widehat {\mu}_{_\mathcal{U}}\right|.
\end{eqnarray*}
Applying \ref{def_sepsilon} and \ref{approxfn_to_sn}, we finally get
\begin{equation}
\left|\ulim_n\,\int_X f_n\:d\mu-\int_{\widehat X} \widehat f\:d\,\widehat {\mu}_{_\mathcal{U}}\right| \ < \ \frac{\varepsilon}{2}+\frac{\varepsilon}{2} \ = \ \varepsilon.
\end{equation}
Finally, as $\varepsilon>0$ may be chosen arbitrarily small, we conclude that
$$\ulim_n\,\int_X f_n\:d\mu \ = \ \int_{\widehat X} \widehat f\:d\,\widehat {\mu}_{_\mathcal{U}}.$$
\end{proof}

We proceed proving the assertion of Theorem~\ref{teo:main} about dominated sequences.

\subsection{Dominated sequences of measurable maps}

The natural decomposition $f_n=f_n^+ - f_n^-$, with $f_n^+ = \max\,\{f,0\}$ and $f_n^- = \max\,\{-f,0\}$, reduces the proof to the case of a $\UU-$integrable sequence of non-negative $\mu$-integrable functions $f_n: X \to \RR$ dominated by a $\mu-$integrable function $g: X\to \RR$.

Given $M \in \NN$, let $f_n \wedge M: X \to \RR$ be the function
$$f_n \wedge M(x)= \left\{\begin{array}{ll}
f_n(x) & \text{if } \,\,f_n(x)\leq M \\
\\
M & \text{otherwise}.
\end{array}
\right.
$$
In an analogous way, define $g \wedge M$. Notice that, for every $M \in \NN$, both maps $f_n \wedge M$ and $g \wedge M$ are measurable and $\mu-$integrable. Besides, as we assume that $\widehat f$ is defined, we have
$$\widehat{f \wedge M} = \ulim_n \,(f_n \wedge M) = \widehat f \wedge M.$$

We already know that the ultralimit $\widehat f: \widehat X \to \RR$ of the sequence $(f_n)_{n \, \in \, \NN}$ is measurable (cf. Lemma~\ref{measurability_ulim_fn}). Moreover, the sequence $(f_n \wedge M)_{n \, \in \, \NN}$ is $\UU-$integrable since
\begin{eqnarray*}
\mathcal{U} \text{-} \lim_n \,\int_X\,|f_n \wedge M|\,d\mu &\leq& M \\
\mathcal{U} \text{-} \lim_n \,\mu(B_n)=0 \quad &\Rightarrow& \quad \mathcal{U} \text{-} \lim_n \, \int_{B_n}\,|f_n \wedge M|\,d\mu \,\,\leq\,\,  \mathcal{U} \text{-} \lim_n \,M\mu(B_n)=0.
\end{eqnarray*}
Therefore, by Proposition~\ref{prop:auxiliary},
\begin{equation}\label{dominated_result_truncated}
\ulim_n\,\int_X (f_n \wedge M)\:d\mu \ = \ \int_{\widehat X} (\widehat f  \wedge M)\:d\,\widehat {\mu}_{_\mathcal{U}} \quad \quad  \forall \,\,M \in \NN.
\end{equation}
Besides, by the Monotone Convergence Theorem,
$$\int_{\widehat X} \widehat f\:d\,\widehat {\mu}_{_\mathcal{U}}  =  \lim_{M \to +\infty}\,\int_{\widehat X} (\widehat f  \wedge M)\:d\,\widehat {\mu}_{_\mathcal{U}}$$
thus, taking the limit as $M$ goes to $+\infty$ at both hand sides of \ref{dominated_result_truncated}, we obtain
\begin{equation*}
\lim_{M \to +\infty} \,\ulim_n\,\int_X (f_n \wedge M)\:d\mu \ = \ \lim_{M \to +\infty}\, \int_{\widehat X} (\widehat f  \wedge M)\:d\,\widehat {\mu}_{_\mathcal{U}} = \int_{\widehat X} \widehat f\:d\,\widehat {\mu}_{_\mathcal{U}}.
\end{equation*}
To finish the proof of Theorem~\ref{teo:main} we are left to show the following property.

\begin{lemma} Under the assumptions of Theorem~\ref{teo:main},
\begin{equation*}
\lim_{M \to +\infty} \,\ulim_n\,\int_X (f_n \wedge M)\:d\mu \ = \ \ulim_n\,\int_X f_n\:d\mu.
\end{equation*}
\end{lemma}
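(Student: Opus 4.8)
The plan is to show that for a $\mathcal{U}$-integrable sequence of non-negative $\mu$-integrable maps $f_n$ dominated by a fixed $\mu$-integrable $g$, the truncation error $\int_X (f_n - f_n \wedge M)\,d\mu$ goes to $0$ uniformly in $n$ as $M \to +\infty$, in a sense strong enough to survive the $\mathcal{U}$-limit. Concretely, since $0 \le f_n \le g$, we have pointwise $0 \le f_n - f_n \wedge M \le g - g \wedge M = (g-M)\chi_{\{g > M\}}$, and hence
\begin{equation*}
0 \le \int_X (f_n - f_n \wedge M)\,d\mu \le \int_{\{g > M\}} g\,d\mu \qquad \forall\, n \in \NN.
\end{equation*}
The right-hand side is independent of $n$ and, because $g$ is $\mu$-integrable, tends to $0$ as $M \to +\infty$ by the Monotone (or Dominated) Convergence Theorem applied to $g\,\chi_{\{g \le M\}} \uparrow g$. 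So first I would record this uniform bound.

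Next I would push the bound through the ultralimit. Writing $\alpha_M := \ulim_n \int_X (f_n \wedge M)\,d\mu$ and $\alpha := \ulim_n \int_X f_n\,d\mu$ (both finite: the former is bounded by $M$ and also by $\alpha$, the latter finite by condition (2) of $\mathcal{U}$-integrability), the monotonicity and the fact that $\ulim_n$ preserves weak inequalities give
\begin{equation*}
0 \le \alpha - \alpha_M = \ulim_n \int_X (f_n - f_n \wedge M)\,d\mu \le \int_{\{g > M\}} g\,d\mu,
\end{equation*}
where for the middle equality I use that $\ulim_n$ is additive on bounded sequences, or simply that the difference of two convergent-along-$\mathcal{U}$ sequences converges along $\mathcal{U}$ to the difference of the limits. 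Letting $M \to +\infty$ and using the uniform estimate from the first step forces $\lim_{M \to +\infty} \alpha_M = \alpha$, which is exactly the claimed identity.

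I expect the only genuinely delicate point to be the interchange of $\ulim_n$ with the difference, i.e. justifying $\ulim_n \int_X (f_n - f_n \wedge M)\,d\mu = \alpha - \alpha_M$. This is safe here because both $(\int_X f_n\,d\mu)_n$ and $(\int_X (f_n \wedge M)\,d\mu)_n$ are bounded real sequences (the first by condition (2), the second by $M$), so their $\mathcal{U}$-limits are finite real numbers and the algebraic properties of $\mathcal{U}$-limits recalled in Subsection~\ref{sse:ultralimits} apply verbatim; no $\pm\infty$ ambiguity arises. One should also note at the outset that the reduction to non-negative $f_n$ via $f_n = f_n^+ - f_n^-$ (already performed above) is legitimate since each $f_n^\pm$ is dominated by $g$ and the sequences $(f_n^\pm)_n$ inherit $\mathcal{U}$-integrability; the truncation lemma is then applied to each piece and the results subtracted. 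With the Lemma in hand, combining it with \eqref{dominated_result_truncated} and the Monotone Convergence Theorem step immediately preceding completes the proof of Theorem~\ref{teo:main}.
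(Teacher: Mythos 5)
Your proof is correct and follows essentially the same route as the paper: both arguments rest on bounding the truncation error $\int_X (f_n - f_n\wedge M)\,d\mu$ uniformly in $n$ by a quantity depending only on $g$ and $M$ that vanishes as $M\to+\infty$, and then passing that bound through the $\mathcal{U}$-limit. Your single two-sided estimate via $(f_n-M)^+\le (g-M)^+$ merely streamlines the paper's pair of one-sided inequalities involving $M\mu(B_M)$ and $\int_{B_M} g\,d\mu$.
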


\begin{proof} Consider, for each $M \in \NN$, the sets
$$A_{n,M}=f_n^{-1}([M,\infty[) \quad \quad \text{and} \quad \quad B_{M}=g^{-1}([M,\infty[).$$
Notice that,
as each $f_n$ is $\mu-$integrable, we have for every $n \in \NN$
$$\lim_{M \to +\infty} \mu(A_{n,M})=0= \lim_{M \to +\infty} \,\int_{A_{n,M}}f_n\;d\mu$$
and, again by the Monotone Convergence Theorem,
$$\lim_{M \to +\infty}\,\int_X (f_n \wedge M)\:d\mu =\int_X f_n\:d\,\mu.$$
Observe now that, for every $n, \,M \in \NN$,
\begin{equation}\label{eq:fn wedge M}
\int_X (f_n \wedge M)\:d\mu \ = \ \int_X f_n\:d\mu -\int_{A_{n,M}} f_n\;d\mu + M\mu(A_{n,M})
\end{equation}
and so, taking into account that
$f_n \geq 0$ and $A_{n,M} \subset B_M$, we conclude that
$$\int_X (f_n \wedge M)\:d\mu \leq  \int_X f_n\:d\mu + M\mu(A_{n,M}) \leq \int_X f_n\:d\mu + M\mu(B_M).$$
Consequently, since $\lim_{M \to +\infty} \, M\mu(B_M)=0$ due to the $\mu-$integrability of $g$, we obtain
\begin{eqnarray}\label{eq:metade da prova}
\lim_{M \to +\infty} \,\ulim_n\,\int_X (f_n \wedge M)\:d\mu & \leq &  \lim_{M \to +\infty} \,\ulim_n\, \Big(\int_X f_n\:d\mu + M\mu(B_M)\Big) \nonumber\\
&=& \ulim_n\, \int_X f_n\:d\mu + \lim_{M \to +\infty} \,M\mu(B_M) \nonumber \\
&=& \ulim_n\, \int_X f_n\:d\mu.
\end{eqnarray}
Conversely, from \eqref{eq:fn wedge M} we also deduce that
$$\int_X (f_n \wedge M)\:d\mu \ \geq \ \int_X f_n\:d\mu -\int_{A_{n,M}} f_n\;d\mu$$
thus, as $0 \leq f_n \leq g$ and $A_{n,M} \subset B_M$,
\begin{eqnarray*}
\int_X (f_n \wedge M)\:d\mu  &\geq&  \int_X f_n\:d\mu -\int_{A_{n,M}} f_n\;d\mu \nonumber \\
&\geq& \int_X f_n\:d\mu -\int_{B_M} f_n\;d\mu \\
&\geq& \int_X f_n\:d\mu -\int_{B_M} g\;d\mu.
\end{eqnarray*}
Therefore,
\begin{eqnarray}\label{eq:a outra metade da prova}
\lim_{M \to +\infty} \,\ulim_n\,\int_X (f_n \wedge M)\:d\mu  &\geq&   \lim_{M \to +\infty} \,\ulim_n\, \Big(\int_X f_n\:d\mu -\int_{B_M} g\;d\mu\Big) \nonumber \\
&=& \ulim_n\, \int_X f_n\:d\mu.
\end{eqnarray}
The proof ends by considering both \eqref{eq:metade da prova} and \eqref{eq:a outra metade da prova}.
\end{proof}

\begin{remark}
\emph{The key ingredient of the previous argument is the uniformity in the variable $n$ of the $\UU-$limit of both sequences $(\mu(A_{n,M}))_{n,\,M \, \in \, \NN}$ and $(\int_{A_{n,M}} f_n\;d\mu)_{n,\,M \, \in \, \NN}$ as $M$ goes to $+\infty$. Thus, Theorem~\ref{teo:main} is still valid if, instead of domination, we assume any other property of the sequence $(f_n)_{n \, \in \NN}$ which ensures that the previous $\UU-$limits are uniform in $n$. See Section~\ref{se:proof-of-Theorem-B}.}
\end{remark}

\section{Proof of Theorem~\ref{teo:second-main}}\label{se:proof-of-Theorem-B}

As in the previous section, we will assume that $f_n \geq 0$ for every $n \in \NN$. To prove Theorem~\ref{teo:second-main} we just need to conclude that the $\UU-$integrability (cf. Subsection~\ref{sse:U-integrable}) of $(f_n)_{n \, \in \NN}$ implies that the $\UU-$limit, as $M$ goes to $+\infty$, of both sequences $(\mu(A_{n,M}))_{\{n,\,M \, \in \, \NN\}}$ and $(\int_{A_{n,M}} f_n\;d\mu)_{\{n,\,M \, \in \, \NN\}}$ is $0$ uniformly in $n$.

Concerning the first sequence, we notice that the assumption (2) of the definition of $\UU-$integrability guarantees that there exists $K>0$ such that
$$\Big\{n \in \NN \colon \,\int_X \,f_n < K\Big\} \quad \in \,\,\mathcal{U}.$$
Therefore, for the elements of this set we have
$$0 \leq \mu(A_{n,M}) \leq \frac{\int_X \,f_n \, d\mu}{M} < \frac{K}{M}$$
and so $\ulim_M \, \mu(A_{n,M}) = 0$ uniformly in $n$.

\medskip

From the $\mu-$integrability assumption of each $f_n$ we know that $\lim_{M \, \to \, +\infty} \,\int_{A_{n,M}} f_n\;d\mu=0$ for every $n \in \NN$. Given $n \in \NN$ and $\varepsilon > 0$, consider the set
$$W_n \,:=\,\Big\{M \in \NN \colon \, \int_{A_{n,M}} f_n \; d\mu < \varepsilon\Big\}.$$
It turns out that each $W_n$ is a co-finite set since $A_{n,M+1} \subset A_{n,M}$ and so, as $f_n \geq 0$, if $M \in W_n$ then $M+1 \in W_n$. This means that $W_n = [\gamma_n,+\infty[$ for some $\gamma_n \in \NN$ which is the least element of $W_n$. We note that one cannot have $\ulim_n \,\gamma_n=+\infty$, otherwise $\ulim_n \,(\gamma_n-1) = +\infty$ although, by the minimality of $\gamma_n$, we must also have
$$\Big\{n \in \NN \colon \, \gamma_n-1 \in W_n \Big\} \quad  \notin \,\,\UU.$$
Yet, these two properties contradict the condition (3) of the definition of $\UU-$integrability of $(f_n)_{n}$. Indeed, for every sequence $(L_n)_{n \, \in \, \NN}$ of elements of $\NN$ satisfying $\lim_{n \, \to \, +\infty}\, L_n = + \infty$, we have $\lim_{n \, \to \, +\infty}\, \mu(A_{n,L_n}) =0$, which, due to condition (3), yields $\ulim_n \int_{A_{n,L_n}} f_n \; d\mu = 0$.
Therefore, there exists $M_0 \in \NN$ such that $\ulim \gamma_n = M_0$. Consequently,
$$\Big\{n \in \NN \colon \, M \in W_n \quad \forall \, M \geq M_0 \Big\} \quad \in \,\ \UU.$$
This is the uniformity we were looking for.

\section{Changing the ultrafilter}\label{se:proof-of-Corollary-B}

Consider a compact metric space $X$, a Borel probability measure $\mu$ in $X$, a measurable map $T: X \to X$, a non-principal ultrafilter $\mathcal{U}$ in $\NN$ and a measurable bounded function $\varphi: X \to \RR$. We will show that there is a Borel shift-invariant probability measure $\eta_{_\mathcal{U}}$ in $\beta \NN$ such that, for every $x \in X$, the space mean in $\beta \NN$ given by
$$\int_{\beta \NN}\,\plim_n \, \Big(\frac{\varphi(x) + \varphi(T(x)) + \cdots + \varphi(T^{n-1}(x))}{n}\Big) \,d\eta_{_\mathcal{U}}(p)$$
is well defined.

A natural dynamics in $\beta \NN$ is determined by the extension to the Stone-$\check{C}$ech compactification $\beta \NN$ of the map $s\colon \mathbb{N} \to \mathbb{N}$ such that $s(n)=n+1$. More precisely, we have defined a continuous map (whom we also call shift)
\begin{eqnarray*}
S \colon \beta \NN \quad &\to& \quad \beta \NN \\
p \quad &\mapsto& \quad S(p) = \{A \colon A - 1 \in p\}
\end{eqnarray*}
where $A-1=\{n \in \NN \colon n+1 \in A\}.$ It is easy to verify that $S$ commutes with the map $\sigma \colon \ell_\infty(\mathbb{R}) \to \ell_\infty(\mathbb{R})$ given by $\sigma\left((b_n)_{n \, \in \, \mathbb{N}}\right) = (b_{n+1})_{n \, \in \, \mathbb{N}}$; that is, given a bounded sequence $(b_n)_{n \, \in \, \NN}$, we have $\splim_n \,\,b_n = \plim_n \,\,b_{n+1}.$ Therefore, by the linearity of the ultralimits we have, for every $k \in \NN$,
\begin{equation}\label{eq:ultra-average}
\frac{1}{k} \,\,\Big(p + S(p) + \cdots + S^{k-1}(p)\Big) - \lim_n \,\,b_n = \plim_n \,\,\Big(\frac{b_n + b_{n+1}+ \cdots + b_{n+k-1}}{k}\Big).
\end{equation}

The space $C^0(\beta \NN)$ of all continuous maps $f \colon \beta \NN \to \mathbb{R}$ (which have compact support) is isometrically isomorphic to the space $\ell_\infty(\mathbb{R})$ of bounded sequences of $\mathbb{R}$. Indeed, if $\tau \colon \mathbb{N} \to \beta \NN$ is the inclusion that takes $n_0 \in \mathbb{N}$ to the principal ultrafilter $\mathcal{U}_{n_0}$, then the maps
\begin{eqnarray*}
G: \ell_\infty(\mathbb{R})\, \to\, C^0(\beta \NN) \quad &\rightarrowtail & \quad G\Big(\overline{b}:=(b_n)_{n \, \in \, \mathbb{N}}\Big) = \psi_{\overline{b}}, \,\,\,\text{ where } \,\,\,\psi_{\overline{b}}\,(p) = \plim_n\,b_n  \\
H: C^0(\beta \NN)\, \to \,\ell_\infty(\mathbb{R}) \quad &\rightarrowtail & \quad H(\psi) = \psi \circ \tau
\end{eqnarray*}
are linear norm-preserving isomorphisms between the two spaces.

Having fixed the ultrafilter $\mathcal{U} \in \beta \NN$, consider the operator $\mathcal{L} \colon  C^0(\beta \NN) \to  \RR$ which assigns to each $\psi \in C^0(\beta \NN)$ (which, after the identification of $C^0(\beta \NN)$ with $\ell_\infty(\mathbb{R})$, may be seen as a bounded sequence $(a_n)_{n\, \in \,\mathbb{N}}$) the real number
$$\mathcal{L}(\psi) = \ulim_n \,\,\Big(\frac{a_1 + a_2 + \cdots + a_{n}}{n}\Big).$$
The operator $\mathcal{L}$ is linear, positive, and $\mathcal{L}(\textbf{1})=1$. Moreover, given $\psi \in C^0(\beta \NN)$,
the map $\psi \circ S$ is represented by the bounded sequence $(a_{n+1})_{n\, \in\, \mathbb{N}}$, and so
\begin{eqnarray*}
\mathcal{L}(\psi \circ S) &=& \ulim_n \,\,\Big(\frac{a_2 + a_3 + \cdots + a_{n+1}}{n}\Big) \\
&=& \ulim_n \,\,\Big(\frac{a_1 + a_2 + \cdots + a_{n}}{n} - \frac{a_1}{n} + \frac{a_{n+1}}{n}\Big) \\
&=& \ulim_n \,\,\Big(\frac{a_1 + a_2 + \cdots + a_{n}}{n} \Big)  = \mathcal{L}(\psi).
\end{eqnarray*}
Therefore, by the Representation Theorem of Riesz-Markov-Kakutani there is a unique regular Borel probability measure $\eta_{_\mathcal{U}}$ on $\beta \NN$ such that
$$\mathcal{L}(\psi) = \int_{\beta \NN}\,\psi\,d\eta_{_\mathcal{U}} \quad \quad \forall \,\,\psi \,\in \,C^0(\beta \NN).$$
For instance, if we take $x_0 \in X$, a bounded map $\varphi \colon X \to \RR$ and the bounded sequence
$$(a_n)_{n \, \in \, \NN}\,\,:=\,\, \Big(\varphi(T^{n}(x_0))\Big)_{n \, \in \, \NN}$$
then we conclude that
$$\ulim_n \,\,\Big(\frac{\varphi(x_0) + \varphi(T(x_0)) + \cdots + \varphi(T^{n-1}(x_0))}{n}\Big) = \int_{\beta \NN}\,\plim_n \,\varphi(T^n(x_0)) \,d\eta_{_\mathcal{U}}(p).$$
We recall that the map $p \in \beta \NN \mapsto \plim_n \,\varphi(T^n(x_0))$ is the continuous Stone-$\check{C}$ech extension of the continuous map $n \in \NN \mapsto \varphi(T^n(x_0))$, and so it is $\eta_{_\mathcal{U}}-$integrable.

An important consequence of the way the probability measure $\eta_{_\mathcal{U}}$ was obtained is the fact that $\eta_{_\mathcal{U}}$ is $S-$invariant. Indeed, in normal Hausdorff spaces (such as the compact Hausdorff $\beta \NN$) the characterization of invariant probabilities made be done using continuous maps (cf. \cite[Theorem 6.2]{Walters}); and, for every $\psi \in C^0(\beta \NN)$,

\begin{equation*}
\int_{\beta \NN}\,\psi \circ S\,d\eta_{_\mathcal{U}} = \mathcal{L}(\psi \circ S) = \mathcal{L}(\psi) = \int_{\beta \NN}\,\psi\,d\eta_{_\mathcal{U}}.
\end{equation*}
Consequently, by the Ergodic Theorem of Birkhoff, given an $L(\widehat{\mathcal{B}})-$measurable and $\eta_{_\mathcal{U}}-$integrable map $\psi: \beta \NN \to \RR$, the sequence of averages

$$\Big(\frac{\psi(p) + \psi(S(p)) + \psi(S^2(p)) + \cdots + \psi(S^{n-1}(p))}{n}\Big)_{n \, \in \, \mathbb{N}}$$
converges at $\eta_{_\mathcal{U}}$ almost every $p \in \beta \NN$, thus defining an $L(\widehat{\mathcal{B}})-$measurable and $\eta_{_\mathcal{U}}-$integrable map $\widetilde{\psi}$ such that $\widetilde{\psi}\circ S=\widetilde{\psi}$. Moreover, $\int_{\beta \NN}\,\widetilde{\psi} \,d\eta_{_\mathcal{U}} = \int_{\beta \NN}\,\psi \,d\eta_{_\mathcal{U}}$, that is,

\begin{equation*}
\int_{\beta \NN}\,\lim_n \frac{\psi(p) + \psi(S(p)) + \cdots + \psi(S^{n-1}(p))}{n} \,d\eta_{_\mathcal{U}}(p) = \int_{\beta \NN}\,\psi \,d\eta_{_\mathcal{U}}.
\end{equation*}
Considering the unique bounded sequence $(a_m)_{m \in \mathbb{N}}$ which represents $\psi$ and the equation \eqref{eq:ultra-average}, the previous equality may be rewritten as

\begin{equation*}
\int_{\beta \NN}\,\lim_n \frac{\Big(\plim_m \, a_m\Big) + \cdots + \Big(\plim_m \,a_{m + n-1}\Big)}{n} \,d\eta_{_\mathcal{U}}(p) = \int_{\beta \NN}\,\plim_m \, a_m \,d\eta_{_\mathcal{U}}(p).
\end{equation*}
In particular, if $x_0 \in X$, $\varphi \colon X \to \RR$ is a bounded map and we consider the bounded sequence

$$(a_m)_{m \, \in \, \NN}\,:=\,\Big(\frac{\varphi(x_0) + \varphi(T(x_0)) + \cdots + \varphi(T^{m-1}(x_0))}{m}\Big)_{m \, \in \, \NN}\,:=\,\psi_{\{x_0,\, \varphi,\, T\}} \,\,\in \beta \NN$$
then we deduce that the space mean in $\beta \NN$ of the ultralimits of the Birkhoff averages along the orbit of $x_0$ by $T$, namely

$$\int_{\beta \NN}\,\plim_m \, \Big(\frac{\varphi(x_0) + \varphi(T(x_0)) + \cdots + \varphi(T^{m-1}(x_0))}{m}\Big) \,d\eta_{_\mathcal{U}}(p)$$
is well defined and is given by $\int_{\beta \NN}\,\widetilde{\psi}_{\{x_0,\, \varphi,\, T\}}(p) \,d\eta_{_\mathcal{U}}(p)$, where $\widetilde{\psi}_{\{x_0,\, \varphi,\, T\}}$ stands for the Birkhoff limit of the averages of the observable $\psi_{\{x_0,\, \varphi,\, T\}} \in C^0(\beta \NN)$ with respect to the dynamics $S$ and the probability measure $\eta_{_\mathcal{U}}$, that is,

\begin{eqnarray*}
\widetilde{\psi}_{\{x_0,\, \varphi,\, T\}}(p)=\lim_n \,\frac{\Big(\plim_m \, a_m\Big) + \Big(\plim_m \, a_{m+1}\Big) + \cdots + \Big(\plim_m \,a_{m + n-1}\Big)}{n}.
\end{eqnarray*}

\medskip
\noindent at $\eta_{_\mathcal{U}}$ almost every $p \in \beta \NN$.

\section{Example} Take $X=[0,1]$ and consider the dynamics $T(x) = \frac{x}{2}$ if $x \neq 0$, $T(0)=0$, the Dirac measure $\mu=\delta_0$ supported on $\{0\}$ and the map $\varphi = \text{Identity}_{[0,1]}$. Observe that the non-wandering set of $T$ is $\{0\}$ and that $\mathcal{U} \text{-} \lim_n\,\,\,\frac 1 n \,\sum_{j=0}^{n-1}\,T^j(x)=0$ for every $x \in \,[0,1]$ and any ultrafilter $\UU$, since the sequence $(T^n(x))_{n \in \NN}$ converges to $0$. Therefore, by Corollary~\ref{cor:main}, the map $\widehat \varphi_{_\mathcal{U}}$ is $0$ at $\widehat \mu_{_\mathcal{U}}$ almost every point of $\widehat{[0,1]}$, because $\widehat \varphi_{_\mathcal{U}} \geq 0$ and
$$\int_{_{\widehat X}}\,\widehat \varphi_{_\mathcal{U}}\,d\,\widehat \mu_{_\mathcal{U}} \,= \,\ulim_n \,\frac 1 n \,\sum_{j=0}^{n-1}\, \int_{_{X}} \,\varphi\circ T^j \, d\mu \,=\, \mathcal{U} \text{-} \lim_n\,\,\,\frac 1 n \,\sum_{j=0}^{n-1}\,T^j(0) = 0.$$
We note that $\int_X \,\varphi \,\,d\mu = \varphi(0)=0$ as well. If we consider instead the Dirac measure $\mu=\delta_1$ supported on $\{1\}$, then the previous argument also proves that, for every ultrafilter $\UU$,
the map $\widehat \varphi_{_\mathcal{U}}$ is $0$ at $\widehat \mu_{_\mathcal{U}}$ almost every point of $\widehat{X}$.
However, now we get $\int_X \,\varphi \,\,d\mu = \varphi(1)=1$.

\end{document}